\documentclass[11pt,a4paper,reqno]{amsart}
\usepackage{newtxtext}

\usepackage{epsfig}
\usepackage{amsfonts}
\usepackage{amsmath}
\usepackage{amssymb}
\usepackage{amsthm}
\usepackage{times}
\usepackage{xcolor}
\usepackage{enumerate}
\usepackage[T1]{fontenc}
\usepackage[utf8]{inputenc}
\usepackage{graphicx}
\usepackage{nicefrac}
\usepackage[includehead,includefoot,margin=25mm]{geometry}
\usepackage[all]{xy}
\usepackage{multicol}
\usepackage[colorlinks=true,linkcolor=blue,citecolor=blue,urlcolor=blue]{hyperref}
\usepackage{cleveref}
\usepackage{comment}

\newtheorem{theorem}{Theorem}[section]
\newtheorem*{theorem*}{Theorem}
\newtheorem{corollary}[theorem]{Corollary}
\newtheorem{lemma}[theorem]{Lemma}
\newtheorem{proposition}[theorem]{Proposition}
\newtheorem{question}[theorem]{Question}

{\theoremstyle{remark}
  \newtheorem{remark}[theorem]{Remark}}
{\theoremstyle{definition}

  \newtheorem{example}[theorem]{Example}

}

\newcommand{\LND}[0]{\ensuremath{\operatorname{LND}}}
\newcommand{\GM}[0]{\ensuremath{\mathbb{G}_{\mathrm{m}}}}
\newcommand{\Aut}[0]{\ensuremath{\operatorname{Aut}}}

\makeatletter \newcommand*\bigcdot{\mathpalette\bigcdot@{.5}} \newcommand*\bigcdot@[2]{\mathbin{\vcenter{\hbox{\scalebox{#2}{$\m@th#1\bullet$}}}}} \makeatother

\begin{document}

\title{On isotropy group of  locally finite derivations on $\mathbb{K}[X,Y]$}

\author{Luis Cid}
\address{Instituto de Matem\'atica y F\'\i sica, Universidad de Talca,
  Casilla 721, Talca, Chile.}%
\email{luis.cid@utalca.cl}

\author{Marcelo Veloso} 
\address{Departamento de Fisíca Estatística e Matemática, Universidade Federal de S\~ao Jo\~ao del-Rei, Brasil}
\email{veloso@ufsj.edu.br}

\date{\today}

\subjclass[2020]{13N15, 14R10, 13B10}

\keywords{locally finite derivation, isotropy group, exponential automorphism, polynomial automorphism, Jordan decomposition}

\begin{abstract}
In this paper, we study the isotropy groups of locally finite derivations of the polynomial ring $\mathbb{K}[X,Y]$, using Van den Essen's classification of locally finite derivations in two variables. We compare the isotropy group of a locally finite derivation with that of its associated exponential automorphism, showing that they coincide in the locally nilpotent case, whereas they may differ when the semisimple part is nontrivial. We also prove that every nonzero locally finite derivation has a nontrivial isotropy group.
\end{abstract}

\maketitle

\section{Introduction}

Let $\mathbb{K}$ be an algebraically closed field of characteristic zero, and let $B$ be an affine $\mathbb{K}$-algebra. We denote by $\mathbb{K}[X_1,\ldots,X_m]$ the polynomial ring in $m$ variables over $\mathbb{K}$, and in some cases we simply write $\mathbb{K}^{[m]}$. We write $\operatorname{End}(B)$ for the monoid of $\mathbb{K}$-algebra endomorphisms of $B$, and $\operatorname{Aut}(B)$ for the group of $\mathbb{K}$-algebra automorphisms of $B$.

A derivation on $B$ is a $\mathbb{K}$-linear map $D\colon B\to B$ satisfying the Leibniz rule
\[
D(ab)=aD(b)+bD(a), \qquad a,b\in B.
\]
We denote by $\operatorname{Der}(B)$ the set of all derivations of $B$. The group $\operatorname{Aut}(B)$ acts on $\operatorname{End}(B)$ by conjugation:
\[
\operatorname{Aut}(B)\times \operatorname{End}(B)\to \operatorname{End}(B),\qquad
(\varphi,E)\mapsto \varphi E\varphi^{-1}.
\]
For a derivation $D\in \operatorname{Der}(B)$, the stabilizer of $D$ under this action is called the \emph{isotropy group} of $D$, and is defined by
\[
\operatorname{Aut}(B)_D
:=
\{\varphi\in \operatorname{Aut}(B)\mid \varphi D\varphi^{-1}=D\}
=
\{\varphi\in \operatorname{Aut}(B)\mid \varphi D=D\varphi\}.
\]

If two derivations $D$ and $D'$ are conjugate, say $D'=\varphi D\varphi^{-1}$ for some $\varphi\in \operatorname{Aut}(B)$, then their isotropy groups are conjugate as subgroups of $\operatorname{Aut}(B)$:
\[
\operatorname{Aut}(B)_D=\varphi^{-1}\operatorname{Aut}(B)_{D'}\varphi.
\]
Therefore, the computation of isotropy groups may be reduced to representatives of conjugacy classes.

In recent years, several authors have investigated isotropy groups of derivations in various contexts. In \cite{mendes17plane} L. G. Mendes and I. Pan showed that if $D$ is a simple derivation on $\mathbb{K}^{[2]}$, then $\operatorname{Aut}_D(\mathbb{K}^{[2]})$ is trivial. Later,  L.~N.~Bertoncello and D.~Levcovitz, \cite{bertoncello2020isotropy}, proved that $\operatorname{Aut}_D(\mathbb{K}^{[m]})=\{id\}$ when $D$ is a simple Shamsuddin derivation. The converse statement was subsequently established by D.~Yan \cite{yan2024shamsuddin}. In another direction, R.~Baltazar and M.~Veloso \cite{BalVel21} studied isotropy groups of locally nilpotent derivations on certain Danielewski surfaces, namely surfaces defined by equations of the form
\[
f(X)Y-\varphi(Z),
\]
over an algebraically closed field of characteristic zero.

The main purpose of this paper is to study isotropy groups of locally finite derivations of the polynomial ring $\mathbb{K}[X,Y]$. Using Van den Essen's classification of locally finite derivations in two variables, we determine the isotropy group in each case and prove, in particular, that every nonzero locally finite derivation has a nontrivial isotropy group.

In the last section, we extend this point of view to automorphisms arising from the exponential of locally finite derivations. Since conjugation is compatible with exponentials,
\[
\varphi\exp(D)\varphi^{-1}=\exp(\varphi D\varphi^{-1}),
\]
it is natural to compare the isotropy group of a locally finite derivation with that of its associated exponential automorphism. We show that these two groups coincide in the locally nilpotent case, whereas they may differ when the semisimple part is nontrivial. Thus, besides isotropy groups of locally finite derivations, we also study isotropy groups of the corresponding exponential automorphisms on $\mathbb{K}[X,Y]$.

\smallskip
\noindent\textbf{Organization of the paper.}
Section~\ref{sec:prelim} recalls the necessary background: locally finite derivations and automorphisms, the Jordan--Chevalley decomposition, and the exponential map. Section~\ref{sec:isotropyLFD} contains the main computations: we determine the isotropy group of each normal form of locally finite derivation on $\mathbb{K}[X,Y]$ (Theorems~\ref{th3.2}--\ref{th3.8}), and we deduce that every nonzero locally finite derivation has a nontrivial isotropy group (Proposition~\ref{cor:nontrivial}). Section~\ref{sec:isotropyLFA} studies the isotropy groups of the corresponding exponential automorphisms, establishes when the isotropy of $D$ coincides with the isotropy of $\exp(D)$ (Propositions~\ref{prop:LND-equality} and~\ref{prop:criterion-equality}), and concludes with a complete description for locally finite automorphisms on $\mathbb{C}[X,Y]$ (Theorem~\ref{th:IsotropyLFA}).

\section{Preliminaries}\label{sec:prelim} 

This section is devoted to setting up notation and recalling the main concepts and results to be used throughout the paper.

\subsection{Locally finite endomorphisms and derivations}

An endomorphism $E \in \operatorname{End}(B)$ is called \emph{locally finite} if, 
for every $b \in B$, the $\mathbb{K}$-vector space generated by $\{E^{(n)}(b) \mid n \ge 0\}$
is finite-dimensional; for short we denote $E^{(n)}$ by $E^n$. Equivalently, for each $b \in B$, there exists a monic polynomial $p_b(T) \in \mathbb{K}[T]$ such that $p_b(E)(b)=0$, that is, the element $b$ satisfies a polynomial relation with respect to $E$. Analogously, we define a \emph{locally finite derivation} on $B$.  The set of locally finite derivations is denoted by $\operatorname{LFD}(B)$ and the set of locally finite automorphisms by $\operatorname{LFA}(B)$.

Let $D$ be a nonzero locally finite derivation on $\mathbb{K}[X]$. It is easy to verify that  $D$ must be of the form $D = (aX + b)\frac{\partial}{\partial X}$. In the polynomial ring in two variables, we have the following characterization of locally finite derivations, up to the action of an automorphism.

\begin{lemma}[Corollary 4.7, \cite{V92}]
\label{lfdclas}
Let $D\neq 0$ be a locally finite derivation on $\mathbb{K}[X,Y]$. Then there exists an automorphism $\varphi \in \operatorname{Aut}(\mathbb{K}[X,Y])$ such that $\varphi D \varphi^{-1}$ is one of the following:
\begin{enumerate}
    \item $D = f(X)\dfrac{\partial}{\partial Y}$, where $f(X) \in \mathbb{K}[X]$, $f\neq 0$;
    \item $D = \dfrac{\partial}{\partial X} + bY\dfrac{\partial}{\partial Y}$, where $b\in\mathbb{K}^*$;
    \item $D = aX\dfrac{\partial}{\partial X} + (amY + X^m)\dfrac{\partial}{\partial Y}$, with $a\in\mathbb{K}^*$, $m\in \mathbb{Z}$, $m\geq 1$;
    \item $D = (aX + bY)\dfrac{\partial}{\partial X} + (cX + dY)\dfrac{\partial}{\partial Y}$, where $a,b,c,d \in \mathbb{K}$.
\end{enumerate}
\end{lemma}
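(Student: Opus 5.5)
The plan is to reduce to the Jordan--Chevalley decomposition and treat the nilpotent and the non-nilpotent parts separately. Write $D = D_s + D_n$, with $D_s$ semisimple, $D_n$ locally nilpotent, and $[D_s,D_n]=0$. Both parts are locally finite derivations and are polynomials in $D$ on each finite-dimensional $D$-stable subspace. This decomposition is compatible with conjugation: $\varphi D\varphi^{-1}$ has parts $\varphi D_s\varphi^{-1}$ and $\varphi D_n\varphi^{-1}$.

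\textbf{The purely nilpotent case.} Suppose first that $D_s=0$. Then $D$ is a nonzero locally nilpotent derivation of $\mathbb{K}[X,Y]$. By Rentschler's theorem it is conjugate to $f(X)\partial_Y$ with $f\neq 0$, which is case (1).

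\textbf{Linearizing $D_s$.} Suppose next that $D_s\neq 0$. A semisimple derivation generates an algebraic torus action on $\mathbb{A}^2$: it induces a rational action of a diagonalizable group, namely the Zariski closure of $\{\exp(tD_s)\}$. By the linearization of torus actions on the plane (Gutwirth, or equivalently the structure of $\mathbb{G}_m$- and $\mathbb{G}_m^2$-actions on $\mathbb{A}^2$ via the amalgamated product structure of $\operatorname{Aut}$), we may conjugate so that $D_s$ is linear and diagonal. A diagonalizable algebraic group acting on the plane fixes a point and is linearizable, so after conjugation
\[
D_s=\lambda X\partial_X+\mu Y\partial_Y .
\]
This is the step I expect to be the main obstacle, since it rests on the fixed-point and linearization results for $\mathbb{A}^2$. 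An alternative route avoids the torus language and uses Van den Essen's argument that $D$ has a nonconstant eigenvector or a degree bound on $D$-stable filtrations; I would try the torus route first.

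\textbf{Determining $D_n$.} It remains to find $D_n$, a locally nilpotent derivation commuting with $D_s$. Since $D_n$ commutes with $D_s$, it is homogeneous of weight $0$ for the grading given by $(\lambda,\mu)$. Such a $D_n$ is a sum of monomials $X^iY^j\partial_X$ with $(i-1)\lambda+j\mu=0$ and $X^iY^j\partial_Y$ with $i\lambda+(j-1)\mu=0$, and it must also be locally nilpotent. I would split by the weights.
\begin{enumerate}
\item If $\lambda,\mu$ are $\mathbb{Q}$-independent, or are generic, then $D_n$ is linear nilpotent, possible only when $\lambda=\mu$. Together with $D_s$ this gives case (4).
\item If $\lambda=0\neq\mu$, then $D_n$ contains terms $g(X)\partial_X$ and $Y h(X)\partial_Y$. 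Local nilpotency forces $D_n=c\partial_X$. If $c\neq0$ we rescale to get case (2), and if $c=0$ we are in case (4).
\item If $\mu=m\lambda$ with $m\ge1$, then weight-zero locally nilpotent derivations include $X^m\partial_Y$ (for $m=1$ also linear ones). This gives case (3), or case (4) when $m=1$ and the nilpotent part is linear, or when $D_n=0$.
\end{enumerate}
Negative or other rational ratios admit no nonzero triangular weight-zero locally nilpotent derivations. This is checked by noting that a locally nilpotent derivation commuting with a torus is conjugate, within the centralizer, to a triangular one, and then reading off the weight equations.

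Finally, scalars are absorbed by diagonal automorphisms. This finishes the reduction to the four normal forms (1)--(4).
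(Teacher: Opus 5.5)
The paper gives no argument for this lemma; it quotes Van den Essen. His classification (cf.\ Remark~\ref{rem:BassMeisters}) comes from the polynomial flow $t\mapsto\exp(tD)$. Being of bounded degree, this flow is conjugated, via the amalgamated product structure of $\operatorname{Aut}(\mathbb{K}[X,Y])$, into the affine or the triangular subgroup, and affine and triangular locally finite derivations are then normalized by explicit conjugations. Your route is genuinely different and sound in substance. You split $D=D_s+D_n$, dispose of $D_s=0$ by Rentschler, linearize $D_s\neq0$ using the linearization of torus actions on $\mathbb{A}^2$ (Gutwirth, Bia\l ynicki-Birula, Kambayashi), and read off $D_n$ as a weight-zero locally nilpotent derivation for the grading by $(\lambda,\mu)$. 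This makes the normal forms transparent as weight resonances: $\lambda=0$ or $\mu=0$ allows a translation part (type (2)), $\mu=m\lambda$ with $m\ge2$ allows $X^m\partial_Y$, $\lambda=\mu$ allows a linear nilpotent part, and everything else forces $D_n=0$. It also matches the Jordan-decomposition viewpoint the paper uses later. The price is two independent deep inputs, one per Jordan part. The flow/amalgamation argument instead uses a single structural theorem for all of $D$ and gives the link to Bass--Meisters flows directly.

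Several steps need tightening, though none is a real obstruction.
\begin{itemize}
\item Over a general $\mathbb{K}$ there is no $\exp(tD_s)$. Instead, $D_s$ grades $\mathbb{K}[X,Y]$ by the finitely generated group $\Lambda\subset\mathbb{K}$ spanned by its weights. This is a faithful action of the torus $\operatorname{Hom}(\Lambda,\mathbb{G}_{\mathrm{m}})$, whose rank is at most $2$ because homogeneous elements with $\mathbb{Z}$-independent weights are algebraically independent.
\item Your item (1) is muddled. For $\mathbb{Q}$-independent $\lambda,\mu$ the weight-zero derivations are only $aX\partial_X+dY\partial_Y$, so $D_n=0$; the case $\lambda=\mu$ is a rationally dependent one.
\item In item (3) you need \emph{exactly} rather than \emph{include}: for $m\ge2$ the weight-zero derivations are $aX\partial_X+(dY+bX^m)\partial_Y$. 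The symmetric cases $\mu=0$ and $\lambda=m\mu$ come from swapping $X$ and $Y$.
\item Most importantly, your claim that an LND commuting with a torus is conjugate, inside the centralizer, to a triangular one is unproved and unnecessary. Use instead the standard fact that for a locally nilpotent $E$ on a domain, $f\mid E(f)$ forces $E(f)=0$. A weight-$\lambda$ monomial $X^iY^j$ with $i=0$ exists only if $\lambda\in\mathbb{Z}_{\ge0}\mu$, and a weight-$\mu$ monomial with $j=0$ exists only if $\mu\in\mathbb{Z}_{\ge0}\lambda$. Outside these cases $X\mid E(X)$ and $Y\mid E(Y)$, so $E=0$ and $D=D_s$ is linear. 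The same fact eliminates $a$ and $d$ in item (3) and $h$ in your item (2) in a line each.
\end{itemize}
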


\begin{remark}
We note that the derivation $\tfrac{\partial}{\partial X}$ (i.e.\ Type~(2) with $b=0$) falls under Type~(1) with $f=1$ in the above classification. In Theorem~\ref{th3.3} below we treat both $b\neq 0$ and $b=0$ together for completeness, noting that the $b=0$ case is already covered by Theorem~\ref{th3.2} (with $f(X)=1$, $\alpha=1$, $\beta=0$, $\gamma=1$). The two subcases exhibit qualitatively different isotropy groups, which justifies the unified treatment.
\end{remark}

\subsection{Jordan decomposition and the exponential map}

A derivation $D \in \operatorname{Der}(B)$ is called \emph{semisimple} if there exists a basis of semi-invariants $\{b_l\}_{l \in I}$ such that $D(b_l) = \lambda_l b_l$ with $\lambda_l \in \mathbb{K}$. The set of semisimple derivations is denoted by $\operatorname{SSD}(B)$. The derivation $D$ is called \emph{locally nilpotent} if for every $f \in B$, there exists $j \in \mathbb{Z}_{\geq 0}$ such that $D^j(f) = 0$. The set of such derivations is denoted by $\operatorname{LND}(B)$. Both semisimple and locally nilpotent derivations are particular cases of locally finite derivations.

Given $D \in \operatorname{LFD}(B)$, we know that $D$ admits a Jordan decomposition $D = D_s + D_n$, where $D_s$ is semisimple, $D_n$ is locally nilpotent, and $[D_s, D_n] = 0$ (see~\cite[Proposition~1.3.13]{van2000polynomial}). Similarly, if $\varphi \in \operatorname{LFA}(B)$, then it admits a decomposition $\varphi = \varphi_s \circ \varphi_u$, where $\varphi_s$ is semisimple, $\varphi_u$ is unipotent, and they commute.

Following Maubach's Ph.D.\ thesis \cite{Maubach03}, if $D \in \operatorname{LFD}(B)$, one defines its exponential  \linebreak
\(\exp(D)\colon B \to B\) 
by the formal power series
$\displaystyle\exp(D)(b) \;=\; \sum_{j \geq 0} \frac{1}{j!} D^j(b)$.

Since $D$ is locally finite, for each $b \in B$ there exists 
$n = n(b)$ such that $\{D^j(b)\}_{j>n}$ are linearly dependent on $\{D^j(b)\}_{j \le n}$; 
hence the exponential series defining $\exp(D)(b)$ is effectively finite (it stabilizes after finitely many terms).
This defines a locally finite automorphism whose inverse is $\exp(-D)$.

More generally, for any affine domain $B$, if $\{b_l\}_l$ is a basis of semi-invariants for $D_s$ we have $b = \sum_l \beta_l b_l \in B$ where $D_n^{n_l+1}(b_l) = 0$ and $D_s(b_l) = \lambda_l b_l$, then
\[
\exp(D_s)(b_l) = e^{\lambda_l} b_l, \quad \text{and} \quad \exp(D) = \exp(D_n) \circ \exp(D_s).
\]
Thus the explicit automorphism is given by
\[
\exp(D)(b) = \sum_l e^{\lambda_l} \left[b_l + D_n(b_l) + \frac{1}{2!} D_n^2(b_l) + \cdots + \frac{1}{n_l!} D_n^{n_l}(b_l)\right].
\]

We consider the action of $\operatorname{Aut}(B)$ on $\operatorname{LFD}(B)$ by conjugation:
\[
\begin{aligned}
    \operatorname{Aut}(B)\times \operatorname{LFD}(B)&\longrightarrow \operatorname{LFD}(B) \\
    (\varphi,D) &\mapsto \varphi D \varphi^{-1}.
\end{aligned}
\]

This action is well defined because the minimal polynomial of $D$ and $\varphi D \varphi^{-1}$ are the same. Hence, for all $b\in B$ the $\mathbb{K}$-vector space $\{(\varphi D \varphi^{-1})^n(b) = \varphi D^n \varphi^{-1}(b)\}_{n \geq 0}$ is finite-dimensional.

The stabilizer of $D$ under this action is its isotropy group, $\operatorname{Aut}(B)_D$. Moreover, there is an equivalence between the isotropy groups of conjugate derivations: if $D' = \varphi D \varphi^{-1}$, then
\[
\operatorname{Aut}(B)_D = \varphi \operatorname{Aut}(B)_{D'} \varphi^{-1}.
\]
Thus, it suffices to compute isotropy groups up to conjugacy classes. The orbit of $D$ under this action is the conjugacy class
\[
\mathcal{O}(D) = \operatorname{Cl}(D) := \{\varphi D \varphi^{-1} \mid \varphi \in \operatorname{Aut}(B)\}.
\]
A conjugacy class is called \emph{closed} if it contains all its elements under conjugation.

In \cite{FurMa2010}, Furter and Maubach proved that the conjugacy class of any semisimple automorphism in $\mathbb{K}^2$ is closed. In particular, any such automorphism is conjugate to a diagonal automorphism of the form $\varphi \circ (aX, bY) \circ \varphi^{-1}$  where $a,b\in \mathbb{K}^{\ast}$. As a consequence, any semisimple derivation with integer eigenvalues in $\operatorname{Der}(\mathbb{K}[X,Y])$ is conjugate to a linear derivation of the form $\alpha X \dfrac{\partial}{\partial X} + \beta Y \dfrac{\partial}{\partial Y}$ where $\alpha, \beta\in \mathbb{K}$.

Regular $\GM$-actions are a particular case of semisimple automorphisms. The problem of linearization for such actions in dimension $n > 3$ remains open. Koras, Russell, and Makar-Limanov showed in \cite{KaKoLiRu} that $\GM$-actions on $\mathbb{A}^3$ are linearizable. These actions correspond to semisimple derivations with integer eigenvalues, although not all derivations arise in this way.

\section{Isotropy groups of locally finite derivations}\label{sec:isotropyLFD}

In this section, we determine the isotropy groups of locally finite derivations on polynomial ring in two variables. By Lemma~\ref{lfdclas}, the classification  reduces the analysis to four distinct cases.

We begin with a general result, independent of this classification, establishing that the isotropy group is always nontrivial. The subsequent explicit computations then provide a precise description of the isotropy group in each case.

\begin{proposition}\label{cor:nontrivial}
Let $\mathbb{K}$ be an algebraically closed field of characteristic zero and let $B$ be an affine $\mathbb{K}$-algebra. If $D\in \operatorname{LFD}(B)$ with $D\neq 0$, then $\operatorname{Aut}(B)_D$ is nontrivial.
\end{proposition}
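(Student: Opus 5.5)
The natural candidate is $\exp(D)$ itself, or more flexibly $\exp(tD)$ for $t\in\mathbb{K}$. Since $D$ is locally finite, $tD$ is locally finite for every $t\in\mathbb{K}$. By the preliminaries, $\exp(tD)$ is then a well-defined automorphism of $B$ with inverse $\exp(-tD)$. It commutes with $D$, because on each finite-dimensional $D$-stable subspace $V_b=\operatorname{span}\{D^n(b)\}$ it is a polynomial in $D|_{V_b}$. Hence $\exp(tD)\in \operatorname{Aut}(B)_D$ for all $t$. It remains to choose $t$ so that $\exp(tD)\neq \mathrm{id}$.

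\textbf{Choosing $t$.} Use the Jordan decomposition $D=D_s+D_n$. Two cases arise.

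If $D_n\neq 0$, pick $b$ with $D_n(b)\neq 0$. Decompose $b$ into $D_s$-eigenvectors and take one, $b_l$, with $D_n(b_l)\neq 0$; this is possible because $D_n$ commutes with $D_s$ and so preserves its eigenspaces. Then
\[
\exp(tD)(b_l)=e^{t\lambda_l}\sum_{j}\tfrac{t^j}{j!}D_n^j(b_l).
\]
Choose $k$ maximal with $D_n^k(b_l)\neq0$, so $k\ge1$. The components lie in distinct powers of $t$ with linearly independent coefficient vectors $D_n^j(b_l)$, so $\exp(tD)(b_l)=b_l$ is impossible for $t\neq0$. Here one compares the top nonzero $D_n^k(b_l)$ term, using linear independence of $b_l,\dots,D_n^k(b_l)$, which is standard for a nilpotent operator.

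If $D_n=0$, then $D=D_s\neq0$ has some eigenvector $b_l$ with eigenvalue $\lambda\neq0$. Choosing $t$ with $e^{t\lambda}\neq1$ gives $\exp(tD)(b_l)\neq b_l$. Over a general algebraically closed $\mathbb{K}$, ``$e^{t\lambda}$'' should be read via the paper's convention for $\exp(D_s)$.

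\textbf{Main obstacle and fallback.} The main obstacle is exactly this: for an abstract algebraically closed field of characteristic zero, the exponential of a scalar is not literally defined. I would avoid it by replacing $\exp(tD_s)$ with the algebraic torus action. Define $\sigma_\mu(b_l)=\mu_l b_l$, where $\mu\colon \Lambda\to\mathbb{K}^*$ is a character of the additive subgroup $\Lambda\subset\mathbb{K}$ generated by the eigenvalues. Because $D_s(b_lb_{l'})=(\lambda_l+\lambda_{l'})b_lb_{l'}$, eigenvectors multiply according to addition of eigenvalues, so any group homomorphism $\Lambda\to\mathbb{K}^*$ gives an algebra automorphism. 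Such $\sigma_\mu$ commutes with $D_s$ by construction. It also commutes with $D_n$, since $D_n$ preserves $D_s$-eigenspaces.

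Since $\Lambda$ is a nonzero finitely generated torsion-free group, hence free, and $\mathbb{K}^*$ contains elements of infinite order, a nontrivial character exists. Concretely, send a basis element with $\lambda\neq0$ in its support to $2\in\mathbb{K}^*$ and the rest to $1$; then $\sigma_\mu$ is nontrivial. In the nilpotent case, $\exp(D_n)$ involves only rational coefficients and is nontrivial by the top-degree argument above. Either way we obtain a nontrivial element of $\operatorname{Aut}(B)_D$, which proves Proposition~\ref{cor:nontrivial}.
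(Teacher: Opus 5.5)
Your proof is correct, and it takes a genuinely different and more careful route than the paper's.

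The paper uses only $\exp(tD)$. It asserts that $t\mapsto\exp(tD)(b)=\sum_{j}\frac{t^j}{j!}D^j(b)$ is a polynomial in $t$ with linear coefficient $D(b)\neq 0$, so some $t_0$ gives $\exp(t_0D)\neq\mathrm{id}$. That is only true when $D$ is locally nilpotent. Already for $D=X\frac{\partial}{\partial X}$ one gets $\exp(tD)(X)=e^{t}X$, and when $D_s\neq 0$ the exponential is not even defined over an arbitrary algebraically closed $\mathbb{K}$. So the paper's argument is valid only for locally nilpotent $D$, or over $\mathbb{C}$ if ``polynomial'' is replaced by ``entire function''.

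Your split along the Jordan decomposition is purely algebraic and proves the proposition over every algebraically closed field of characteristic zero, as stated:
\begin{itemize}
\item when $D_n\neq 0$, you use $\exp(D_n)$, which has rational coefficients, commutes with $D$ because $[D_n,D]=[D_n,D_s]=0$, and is nontrivial by the linear independence of $b,D_nb,\dots,D_n^kb$;
\item when $D=D_s$, you use the grading automorphism $\sigma_\mu$ attached to a character $\mu\colon\Lambda\to\mathbb{K}^*$.
\end{itemize}
Your fallback alone is therefore a complete proof, and the $\exp(tD)$ part can be dropped. Note also that $\sigma_\mu$ already works whenever $D_s\neq 0$, whatever $D_n$ is, since $D_n$ preserves the weight spaces.

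Two points are worth writing out.
\begin{itemize}
\item Finite generation of $\Lambda$ comes from $B$ being affine: decompose finitely many algebra generators into weight vectors, and every weight then lies in the monoid generated by their weights. You can also avoid this step: $\mathbb{K}^*$ is divisible, so $\lambda\mapsto 2$ extends from $\mathbb{Z}\lambda$ to all of $\Lambda$.
\item $\mu(\lambda)\neq 1$ holds because $2$ has infinite order in $\mathbb{K}^*$ in characteristic zero.
\end{itemize}
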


\begin{proof}
For each $t\in\mathbb{K}$, the element $tD$ is again locally finite, and the automorphism $\exp(tD)$ is well-defined. The conjugation identity $\exp(tD)\,D\,\exp(-tD)=D$ (which follows from $[tD,D]=0$) shows that $\exp(tD)\in \operatorname{Aut}(B)_D$ for all $t\in\mathbb{K}$. Since $D\neq 0$, there exists $b\in B$ such that $D(b)\neq 0$. Fix such a $b$. The map
\[
t\mapsto \exp(tD)(b)=\sum_{j\ge 0}\frac{t^j}{j!}D^j(b)
\]
is a polynomial in $t$ over $\mathbb{K}$ whose coefficient of $t$ is $D(b)\neq 0$, hence it is nonconstant. Since $\mathbb{K}$ is infinite, there exists $t_0\in\mathbb{K}$ such that $\exp(t_0D)(b)\neq b$. This means $\exp(t_0D)\neq\mathrm{id}_B$. Observe that this conclusion concerns only the action on the specific element $b$, so no cancelation with other elements can restore the identity. Therefore, $\operatorname{Aut}(B)_D$ contains the nontrivial automorphism $\exp(t_0D)$.
\end{proof}

\begin{remark}
Since the isotropy group of every nonzero locally finite derivation on $\mathbb{K}^{[2]}$ is nontrivial, it follows from \cite{mendes17plane} that no simple derivation on $\mathbb{K}^{[2]}$ can be locally finite.
\end{remark}

The next theorem determines the isotropy group of the nilpotent derivations, item (1) of Lemma \ref{lfdclas}.

\begin{theorem}\label{th3.2}
	Let $D = f(X)\dfrac{\partial}{\partial Y}$ be a derivation of $\mathbb{K}[X]$, where $f(X) \in \mathbb{K}[X]$ and  $f(X)\neq 0$. Then
	\[
	\Aut_D(\mathbb{K}[X,Y])
	=\{(\alpha X+\beta,\; \gamma Y+p(X)) \mid \alpha,\gamma\in\mathbb{K}^*,\ \beta\in\mathbb{K},\ p\in\mathbb{K}[X],\
	f(\alpha X+\beta)=\gamma f(X)\}.
	\]
\end{theorem}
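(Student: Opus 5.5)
The plan is to characterize $\varphi\in\Aut(\mathbb{K}[X,Y])$, written as $\varphi=(\varphi(X),\varphi(Y))$, by evaluating the commutation relation $\varphi D=D\varphi$ on the two generators $X$ and $Y$. Since both $\varphi D$ and $D\varphi$ are $\varphi$-derivations, i.e.\ they satisfy $E(ab)=\varphi(a)E(b)+\varphi(b)E(a)$, they agree everywhere as soon as they agree on $X$ and $Y$. So the condition $\varphi\in\Aut_D(\mathbb{K}[X,Y])$ is equivalent to the two polynomial identities
\[
D(\varphi(X))=\varphi(D(X))=0,\qquad D(\varphi(Y))=\varphi(D(Y))=f(\varphi(X)).
\]

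\emph{First coordinate.} Since $D(\varphi(X))=f(X)\,\partial\varphi(X)/\partial Y$ and $f\neq 0$, the first identity gives $\partial\varphi(X)/\partial Y=0$. Hence $\varphi(X)=g(X)\in\mathbb{K}[X]$. To show $g$ is affine, I would use the Jacobian. As $\varphi$ is an automorphism, its Jacobian determinant is a nonzero constant. Here it equals $g'(X)\cdot\partial\varphi(Y)/\partial Y$, so $g'(X)$ is a unit in $\mathbb{K}[X,Y]$. Thus $g'$ is a nonzero constant and $g=\alpha X+\beta$ with $\alpha\in\mathbb{K}^*$.

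\emph{Second coordinate.} The second identity reads $f(X)\,\partial\varphi(Y)/\partial Y=f(\alpha X+\beta)$. The right side lies in $\mathbb{K}[X]$, so $\partial\varphi(Y)/\partial Y=h(X)$ with $f(X)h(X)=f(\alpha X+\beta)$. Because $\alpha\neq0$, $f(\alpha X+\beta)$ has the same degree as $f$. Therefore $h$ is a constant $\gamma$, nonzero since $f(\alpha X+\beta)\neq 0$. Integrating in $Y$ gives $\varphi(Y)=\gamma Y+p(X)$ with $p\in\mathbb{K}[X]$, and the constraint $f(\alpha X+\beta)=\gamma f(X)$ holds.

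\emph{Converse.} Any map $(\alpha X+\beta,\gamma Y+p(X))$ with $\alpha\gamma\neq0$ is a triangular automorphism. If moreover $f(\alpha X+\beta)=\gamma f(X)$, it satisfies both identities above, so it commutes with $D$.

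The only point needing care is showing that $\varphi(X)$ is affine. The Jacobian argument handles this cleanly; alternatively, one can note that $\mathbb{K}[X,Y]=\mathbb{K}[g(X),\varphi(Y)]$ forces $\deg g=1$. Everything else is direct computation.
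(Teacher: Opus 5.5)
Your proposal is correct and follows essentially the same route as the paper. Like the paper, you evaluate $\varphi D=D\varphi$ on $X$ and $Y$ to get $\varphi(X)\in\mathbb{K}[X]$, use the Jacobian to force $\varphi(X)=\alpha X+\beta$ with $\alpha\neq 0$, and read off $\varphi(Y)=\gamma Y+p(X)$ together with $f(\alpha X+\beta)=\gamma f(X)$. The differences are minor: you get the constancy of $\partial\varphi(Y)/\partial Y$ by a degree comparison rather than from the Jacobian, and you explicitly justify both the reduction to generators and the reverse inclusion, which the paper leaves implicit.
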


\begin{proof}
	Let $\rho \in \Aut_D(\mathbb{K}[X,Y])$. Write
	\[
	\rho(X)=\sum_{i=0}^t a_i(X)Y^i, 
	\qquad 
	\rho(Y)=\sum_{j=0}^s b_j(X)Y^j,
	\]
	with $a_i(X), b_j(X) \in \mathbb{K}[X]$. As $D(X)=0$ and $\rho D = D\rho$, we obtain
	\[
	0 = D(\rho(X)) = \sum_{i=1}^t i\,a_i(X)f(X)Y^{i-1}.
	\]
	Because $f(X)\neq 0$, it follows that $a_i(X)=0$ for all $i\geq 1$, hence $\rho(X)=q(X)\in\mathbb{K}[X]$.
	
	Next, from $\rho(D(Y)) = D(\rho(Y))$, we have
	\[
	f(\rho(X)) = D(\rho(Y)) = \sum_{j=1}^s j\,b_j(X)f(X)Y^{j-1}.
	\]
	As the left-hand side is independent of $Y$, we conclude that $b_j(X)=0$ for all $j\geq 2$. Thus,
	\[
	\rho(Y)=p(X)+r(X)Y, \mbox{ with } p(X), r(X)\in\mathbb{K}[X].
	\]
	
	Since $\rho$ is an automorphism, its Jacobian determinant satisfies
	\[
	\det J(\rho) =
	\begin{vmatrix}
		q'(X) & 0 \\
		p'(X)+r'(X)Y & r(X)
	\end{vmatrix}
	= q'(X)\,r(X) \in \mathbb{K}^*.
	\]
	Hence $q'(X), r(X) \in \mathbb{K}^*$, and therefore $q(X)=\alpha X+\beta$ and $r(X)=\gamma$ with $\alpha,\gamma\in\mathbb{K}^*$.
	
	Finally, from $\rho(D(Y))=D(\rho(Y))$ we obtain
	\[
	f(\alpha X+\beta)=\gamma f(X).
	\]
	
	Thus,
	\[
	\rho(X)=\alpha X+\beta, \qquad \rho(Y)=\gamma Y+p(X),
	\]
	with $p(X)\in\mathbb{K}[X]$, $\alpha,\gamma\in\mathbb{K}^*$ and $\beta\in\mathbb{K}$ satisfying $f(\alpha X+\beta)=\gamma f(X)$.
\end{proof}

The next result describes the isotropy group of the derivations in item (2) of Lemma \ref{lfdclas}.

\begin{theorem}\label{th3.3}
Let $D = \frac{\partial}{\partial X} + bY \frac{\partial}{\partial Y}$ be a derivation of $\mathbb{K}[X,Y]$, where $b \in \mathbb{K}^*$.
Then
    \[
    \operatorname{Aut}_D(\mathbb{K}[X,Y]) = \left\{ (X + \beta,\; \gamma Y) \;\middle|\; \beta \in \mathbb{K},\ \gamma\in \mathbb{K}^* \right\}.
    \]
 \end{theorem}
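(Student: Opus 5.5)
The plan is to check first that every $(X+\beta,\gamma Y)$ commutes with $D$, and then to show that any $\rho$ in the isotropy group must have this form. The first part is immediate. For $\rho=(X+\beta,\gamma Y)$ we have
\[
\rho D(X)=1=D(\rho(X)), \qquad \rho D(Y)=b\gamma Y=D(\gamma Y).
\]
Since two derivations that agree on $X$ and $Y$ coincide, $\rho D\rho^{-1}=D$.

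For the converse, let $\rho\in\operatorname{Aut}_D(\mathbb{K}[X,Y])$ and write $P=\rho(X)$ and $Q=\rho(Y)$. Applying $\rho D=D\rho$ to $X$ and to $Y$ gives
\[
D(P)=1, \qquad D(Q)=bQ.
\]
I would solve these two equations using the semi-invariant structure of $D$. On monomials,
\[
D(X^iY^j)=iX^{i-1}Y^j+jbX^iY^j.
\]
So $D$ preserves each subspace $\mathbb{K}[X]Y^j$ and acts there as $(\partial_X+jb)$ on the coefficient.

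First consider $D(Q)=bQ$. Write $Q=\sum_j q_j(X)Y^j$. The equation splits as
\[
q_j'+(j-1)b\,q_j=0 \quad\text{for each } j.
\]
When $j\neq 1$, comparing degrees in $X$ forces $q_j=0$, because $(j-1)b\neq 0$ and a nonzero polynomial cannot equal a nonzero multiple of its own derivative. When $j=1$ the equation is $q_1'=0$, so $q_1=\gamma$ is a constant. Hence $Q=\gamma Y$. Then $\gamma\neq 0$, because $\rho$ is injective.

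Next consider $D(P)=1$. Write $P=\sum_j p_j(X)Y^j$. For $j\ge 1$ the component equation is $p_j'+jb\,p_j=0$, which forces $p_j=0$ by the same degree argument. For $j=0$ it is $p_0'=1$, so $p_0=X+\beta$. Thus $\rho=(X+\beta,\gamma Y)$, and this is automatically an automorphism.

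The only point requiring care is the degree argument for the equation $q'+cq=0$ with $c\neq 0$. I do not expect a real obstacle here. It is worth remarking that the hypothesis $b\neq 0$ is used exactly at this step: when $b=0$, every component satisfies $p_j'=0$ or $p_j'=1$, and the Jacobian argument of Theorem~\ref{th3.2} takes over, producing the larger group found there.
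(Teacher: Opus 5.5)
Your proposal is correct and takes essentially the same route as the paper. Both expand $\rho(X)$ and $\rho(Y)$ in powers of $Y$, reduce $\rho D=D\rho$ on the generators to the component equations $p_0'=1$, $p_j'+jb\,p_j=0$ ($j\ge 1$) and $q_j'+(j-1)b\,q_j=0$, and eliminate the unwanted components by the degree argument that uses $b\neq 0$; you also check the reverse inclusion that every $(X+\beta,\gamma Y)$ commutes with $D$, which the paper leaves implicit.
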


\begin{proof}
Let $\rho \in \operatorname{Aut}(\mathbb{K}[X,Y])$ be given by
\[
\rho(X) = \sum_{i=0}^t a_i(X) Y^i, \qquad \rho(Y) = \sum_{j=0}^s b_j(X) Y^j.
\]
Suppose that $\rho D = D \rho$. We first compute
\[
D(\rho(X)) = \sum_{i=0}^t a_i'(X) Y^i + \sum_{i=1}^t i b a_i(X) Y^i = a_0'(X) + \sum_{i=1}^t \left( a_i'(X) + i b a_i(X) \right) Y^i.
\]
Equating this with $\rho(D(X)) = 1$, we get:
\[
\begin{cases}
a_0'(X) = 1, \\
a_i'(X) + i b a_i(X) = 0 \quad \text{for all } i \geq 1.
\end{cases}
\]
The first equation implies $a_0(X) = X + \alpha, \quad \alpha \in \mathbb{K}$. For the second equation the only polynomial solution is $a_i(X)=0$, because   $b \neq 0$. Thus

\[
\rho(X) = X + \alpha,\ \alpha \in \mathbb{K}, \quad \text{if }   b \neq 0.
\]
Now compute
\[
D(\rho(Y)) = b_0'(X) + \sum_{j=1}^s \left( b_j'(X) + j b b_j(X) \right) Y^j
\mbox{ and }
\rho(D(Y)) = b \rho(Y) = \sum_{j=0}^s b\, b_j(X) Y^j.
\]
Comparing both expressions gives
\[
\begin{cases}
b_0'(X) = b\, b_0(X), \\
b_j'(X) + j b b_j(X) = b\, b_j(X) \quad \text{for } j\geq 1.
\end{cases}
\]
The first equation has no nonzero polynomial solution (degree argument) and $b\neq 0$. So $b_0(X) = 0$. The second equation implies    $b_1'(X)=0$, if $j=1$. Hence $b_1(X)\in\mathbb{K}^*$. If $j\geq 2$, the same degree argument shows $b_j(X)=0$. Thus
$\rho(Y) = \gamma Y$ with  $\gamma \in \mathbb{K}^*$.

\end{proof}

Observe that, if $b = 0$ in Theorem~\ref{th3.3}, then $D = \frac{\partial}{\partial X}$. This case is already covered by Theorem~\ref{th3.2} by taking $f(X) = 1$ and performing the change of variables $(X,Y) \mapsto (Y,X)$. In this setting, the condition $f(\alpha X + \beta) = \gamma f(X)$ forces $\gamma = 1$. Therefore,
    \[
    \operatorname{Aut}_D(\mathbb{K}[X,Y]) = \left\{ (X + r(Y),\; \alpha Y + \beta) \;\middle|\; \alpha\in \mathbb{K}^*,\ \beta \in \mathbb{K},\ r(Y) \in \mathbb{K}[Y] \right\}.
    \]
   
The next result describes the isotropy group of the derivations in item (3) of Lemma \ref{lfdclas}.
    \begin{theorem}
\label{th3.4}
		Let $D=aX\dfrac{\partial}{\partial X}+(amY+X^m)\dfrac{\partial}{\partial Y}$ be a derivation of $\mathbb{K}[X,Y]$,	where   $a \in \mathbb{K}^{*}$ and $m\geq 2$. Then 
\[ 
\operatorname{Aut}_D(\mathbb{K}[X,Y])=
\{(\alpha^m X, \alpha Y+\beta X^m)  \mid \alpha \in \mathbb{K}^*,\ \beta \in \mathbb{K}\}.
\]
\end{theorem}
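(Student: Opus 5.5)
The plan is to reduce to Theorem~\ref{th3.2} by way of the Jordan decomposition of $D$. Write
\[
D_s = aX\frac{\partial}{\partial X} + amY\frac{\partial}{\partial Y}, \qquad D_n = X^m\frac{\partial}{\partial Y}.
\]
Here $D_s$ is semisimple, with the monomials $X^iY^j$ as semi-invariants of eigenvalue $a(i+mj)$. Also $D_n$ is locally nilpotent, and $[D_s,D_n]=0$, because $D_n$ is homogeneous of degree $0$ for the grading $\deg X = a$, $\deg Y = am$ (it sends $Y$, of weight $am$, to $X^m$, of weight $am$). So $D = D_s + D_n$ is the Jordan decomposition of $D$.

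If $\rho\in\operatorname{Aut}_D(\mathbb{K}[X,Y])$, then
\[
D = \rho D\rho^{-1} = \rho D_s\rho^{-1} + \rho D_n\rho^{-1}.
\]
Conjugation preserves semisimplicity, local nilpotence and commutation, so the right-hand side is again a Jordan decomposition of $D$. By uniqueness of the Jordan decomposition (\cite[Proposition~1.3.13]{van2000polynomial}), $\rho$ commutes with both $D_s$ and $D_n$. Conversely, any $\rho$ commuting with $D_s$ and $D_n$ commutes with $D$. Hence
\[
\operatorname{Aut}_D = \operatorname{Aut}_{D_s}\cap\operatorname{Aut}_{D_n}.
\]

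Next apply Theorem~\ref{th3.2} with $f = X^m$. It gives $\rho = (\lambda X+\mu,\ \gamma Y + p(X))$ with $(\lambda X+\mu)^m = \gamma X^m$. Since $m\ge 1$, comparing roots forces $\mu=0$, and then $\gamma = \lambda^m$. Imposing commutation with $D_s$ then finishes the argument:
\begin{itemize}
\item $\rho(X)=\lambda X$ is automatically a semi-invariant of eigenvalue $a$.
\item $\rho(Y)$ must be a semi-invariant of eigenvalue $am$. This requires $D_s(p(X)) = am\,p(X)$, that is, $aX p'(X) = am\,p(X)$, so $p = \beta X^m$.
\end{itemize}
Thus the isotropy group consists of the maps $(\lambda X,\ \lambda^m Y + \beta X^m)$ with $\lambda\in\mathbb{K}^*$, $\beta\in\mathbb{K}$. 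One checks the reverse inclusion directly by computing $\rho D(X)$, $D\rho(X)$, $\rho D(Y)$ and $D\rho(Y)$.

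The step I expect to be most delicate is the final bookkeeping against the displayed parametrization $(\alpha^m X,\ \alpha Y+\beta X^m)$. A direct check shows that this map commutes with $D$ only when $\alpha^{m^2}=\alpha$. So before writing the final step I would verify carefully which normalization is intended, and present the group in the form $\{(\lambda X,\ \lambda^m Y+\beta X^m)\}$ that the computation produces, reconciling it with the statement. As a sanity check I would also redo the computation without Jordan theory. Compare the highest $Y$-degree terms of $\rho(X)$ in $D\rho(X)=a\rho(X)$; this weight argument shows $\rho(X)$ is a $D_s$-semi-invariant of weight $a$, hence a multiple of $X$. Then solve $D\rho(Y) = am\,\rho(Y) + \rho(X)^m$ coefficientwise in $Y$. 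This independently confirms the result in case the uniqueness argument hides a subtlety.
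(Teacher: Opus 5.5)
Your argument is correct, and your doubt about the displayed parametrization is justified: the printed set is wrong, not merely a different normalization. For $m\ge 2$, the map $(2^mX,\,2Y)$ lies in $\{(\alpha^mX,\ \alpha Y+\beta X^m)\}$ but does not commute with $D$, since that would need $2^{m^2}=2$. Conversely, $(2X,\,2^mY)$ commutes with $D$ but is not of that form. So neither inclusion holds, and the exponent $m$ has been put on the wrong coordinate.

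The paper's own proof in fact ends at $\rho=(cX,\ c^mY+\beta X^m)$, which is exactly your group; its closing ``reparametrizing'' step is not valid. The correct form is the one recorded later in Theorem~\ref{th:IsotropyLFA}(3). You should therefore state and prove $\operatorname{Aut}_D(\mathbb{K}[X,Y])=\{(\lambda X,\ \lambda^mY+\beta X^m)\mid \lambda\in\mathbb{K}^*,\ \beta\in\mathbb{K}\}$ directly, rather than try to reconcile it with the printed version.

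Your route differs from the paper's. The paper computes directly.
\begin{itemize}
\item It compares $Y$-degrees in $D\rho(X)=a\rho(X)$. The top coefficient satisfies $Xa_t'=(1-mt)a_t$, which forces $t=0$ and $\rho(X)=cX$.
\item It does the same in $D\rho(Y)=am\,\rho(Y)+c^mX^m$, giving $\rho(Y)=\alpha Y+p(X)$.
\item Comparing $X$-coefficients then gives $p=\beta X^m$ and $\alpha=c^m$.
\end{itemize}
This is essentially your sanity-check computation; the paper only asserts the $Y$-degree step.

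Your main argument instead uses uniqueness of the Jordan decomposition to split $\operatorname{Aut}_D=\operatorname{Aut}_{D_s}\cap\operatorname{Aut}_{D_n}$. The already-proved Theorem~\ref{th3.2}, applied to $D_n=X^m\partial_Y$, then forces the shape $(\lambda X,\ \lambda^mY+p(X))$, and $D_s$ is used only to cut $p$ down to $\beta X^m$. This is more structural and reusable: the same splitting gives Theorem~\ref{th3.3} in two lines. The price is reliance on uniqueness of the Jordan--Chevalley decomposition for locally finite operators on an infinite-dimensional space. That fact is standard, but it carries the real weight of your reduction, so cite it carefully. The paper's computation, by contrast, is fully elementary and needs only coefficient comparison.
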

\begin{proof}
The proof follows the same strategy as in the previous theorem. Let $\rho \in \Aut_D(\mathbb{K}[X,Y])$.  Using the commutativity condition $\rho D = D\rho$, together with a comparison of degrees in $Y$, one shows that $\rho(X) = cX$ and $\rho(Y) = p(X) + \alpha Y$, with $c,\alpha \in \mathbb{K}^*$ and $p(X)\in \mathbb{K}[X]$. Substituting into the relation $D(\rho(Y)) = \rho(D(Y))$ and comparing degrees in $X$, it follows that $p(X) = \beta X^m$ and $\alpha = c^m$.

Finally, the Jacobian condition ensures that $c \in \mathbb{K}^*$, and hence
\[
\rho = (cX,\, c^m Y + \beta X^m).
\]
Reparametrizing, we obtain the desired description.
\end{proof}

\subsection{The linear case}

For a linear derivation of the form 
\[
D = (aX + bY)\frac{\partial}{\partial X} + (cX + dY)\frac{\partial}{\partial Y},
\]
we reduce our analysis to two cases, based on the Jordan decomposition of the associated matrix:

\[
D_1 = aX \frac{\partial}{\partial X} + bY \frac{\partial}{\partial Y} \quad \text{and} \quad D_2 = (aX + Y)\frac{\partial}{\partial X} + aY \frac{\partial}{\partial Y}.
\]

\begin{remark}
\label{obslinear}
Let $M \in M_2(\mathbb{K})$ be the matrix associated with a linear derivation in two variables. By the Jordan decomposition theorem, $M$ is similar to a Jordan matrix $J$, that is, there exists an invertible matrix $P \in \operatorname{GL}_2(\mathbb{K})$ such that
\[
M = PJP^{-1}, \quad \text{where } J = 
\begin{bmatrix}
\lambda_1 & 0 \\
0 & \lambda_2
\end{bmatrix}
\quad \text{or} \quad
\begin{bmatrix}
\lambda & 1 \\
0 & \lambda
\end{bmatrix}, \mbox{ for some } \lambda_1, \lambda_2, \lambda \in \mathbb{K}.
\]

Consequently, every linear derivation is conjugate to one of the following canonical forms:
\[
D_1 = \lambda_1 X \frac{\partial}{\partial X} + \lambda_2 Y \frac{\partial}{\partial Y} \quad \text{or} \quad D_2 = (\lambda X + Y)\frac{\partial}{\partial X} + \lambda Y \frac{\partial}{\partial Y}.
\]

In particular, if the linear derivation $D_M$ corresponds to matrix $M$ and $M=PJP^{-1}$, then
\[
\operatorname{Aut}_{D_M}(\mathbb{K}[X, Y]) = P^{-1} \operatorname{Aut}_{D_J}(\mathbb{K}[X, Y]) P,
\]
where $P \in \operatorname{GL}_2(\mathbb{K})$ acts on $\operatorname{Aut}(\mathbb{K}[X,Y])$ by the corresponding linear change of variables.
\end{remark}

	\begin{theorem} \label{th3.6}
Let $D = aX \dfrac{\partial}{\partial X} + aY \dfrac{\partial}{\partial Y}$ be a linear derivation of \, $\mathbb{K}[X,Y]$, where   $ a\in \mathbb{K}^{\ast}$. Then 
\[
\operatorname{Aut}(\mathbb{K}[X,Y])_D=\{ (\alpha X+\beta Y,\gamma X+\delta Y) \mid  \alpha, \beta,\gamma, \delta\in \mathbb{K},\ \alpha\delta-\beta\gamma\neq 0\}.
\]    
\end{theorem}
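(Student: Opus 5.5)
The derivation $D=aX\frac{\partial}{\partial X}+aY\frac{\partial}{\partial Y}$ is $a$ times the Euler derivation. Hence $D(h)=a\,k\,h$ for every homogeneous polynomial $h$ of degree $k$, and the homogeneous components $\mathbb{K}[X,Y]_k$ are exactly the eigenspaces of $D$ (eigenvalue $ak$). Since $a\neq 0$ and the field has characteristic zero, the eigenvalues $ak$ are pairwise distinct. The plan is to show that any $\rho$ commuting with $D$ must map eigenvectors to eigenvectors of the same eigenvalue. In particular $\rho(X)$ and $\rho(Y)$ must be linear forms, and conversely every linear automorphism commutes with $D$.

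For the inclusion ``$\subseteq$'', take $\rho\in\operatorname{Aut}(\mathbb{K}[X,Y])_D$ and write $\rho(X)=\sum_k h_k$ with $h_k$ homogeneous of degree $k$. Applying $\rho D=D\rho$ to $X$ gives
\[
D(\rho(X))=\rho(D(X))=a\rho(X).
\]
The left side equals $\sum_k ak\,h_k$, so $a(k-1)h_k=0$ for every $k$. Because $a\neq0$, this forces $h_k=0$ for $k\neq1$, so $\rho(X)=\alpha X+\beta Y$. The same argument applied to $Y$ gives $\rho(Y)=\gamma X+\delta Y$. Since $\rho$ is an automorphism, its Jacobian determinant $\alpha\delta-\beta\gamma$ must be a nonzero constant.

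For the inclusion ``$\supseteq$'', one checks directly that a linear $\rho$ satisfies $\rho D=D\rho$. Both $\rho D$ and $D\rho$ are $\rho$-twisted derivations, meaning maps $E$ with $E(fg)=\rho(f)E(g)+\rho(g)E(f)$. Such a map is determined by its values on $X$ and $Y$, so it suffices to compare them there. On $X$ we have $D(\rho X)=a(\alpha X+\beta Y)=\rho(aX)=\rho(DX)$, and the check on $Y$ is identical. Alternatively, a linear $\rho$ preserves every $\mathbb{K}[X,Y]_k$, and $D$ acts on $\mathbb{K}[X,Y]_k$ as the scalar $ak$, so the two maps commute.

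I do not expect a genuine obstacle here. The only point needing care is the use of $a\neq0$ together with characteristic zero to rule out the components with $k\neq1$ in the eigenvalue comparison.
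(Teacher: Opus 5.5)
Your proof is correct. It differs from the paper's mainly in how the computation is organized.

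The paper expands $\rho(X)=\sum_i a_i(X)Y^i$ in powers of $Y$ and compares coefficients in $D(\rho(X))=a\rho(X)$. This produces, for each $i$, the polynomial differential equation $Xa_i'(X)=(1-i)a_i(X)$, whose solutions $a_i=\lambda_iX^{1-i}$ are polynomials only when $i\le 1$.

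You instead observe that $D$ is $a$ times the Euler derivation. Its eigenspaces are therefore exactly the total-degree components, with pairwise distinct eigenvalues $ak$. The whole computation then collapses to the single identity $a(k-1)h_k=0$, and this makes explicit exactly where $a\neq 0$ and characteristic zero are used. The paper's route avoids the grading but must solve one equation for each power of $Y$. Its write-up also contains a slip: it states $a_0(X)=\alpha$ where $a_0(X)=\alpha X$ is meant, which your graded argument sidesteps.

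You also verify the reverse inclusion, which the paper leaves implicit. Either of your two justifications suffices: twisted derivations are determined by their values on $X$ and $Y$, and a linear $\rho$ preserves each graded piece, on which $D$ acts as a scalar.
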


\begin{proof}
Let $\rho(X)=\displaystyle\sum^t_{i=0}a_i(X) Y^i$ and $\rho(Y)=\displaystyle\sum^{s}_{j=0}b_j(X)Y^j$. Suppose that $\rho D=D\rho$. Applying $D$ to $\rho(X)$, we obtain
\[
D(\rho(X)) = D\left( \sum_{i=0}^t a_i(X) Y^i \right) = \sum_{i=0}^t  aa_i^{'}(X)XY^i + \sum_{i=1}^t a i a_i(X)Y^{i}.
\]
On the other hand,
\[
\rho(D(X)) = \rho(aX) = a \rho(X) = \sum_{i=0}^t aa_i(X) Y^i.
\]
Equating coefficients in $D(\rho(X)) = \rho(D(X))$, we get:
\[
a_0(X)=a_0^{\prime}(X)X  \quad \mbox{ and } \quad a_i^{\prime}(X)X = (1 - i) a_i(X), \mbox{ for } i\geq 1.
\]
The general solution of $Xa_i'(X)=(1-i)a_i(X)$ is $a_i(X) = \lambda_i X^{1-i}$. For this to be a polynomial we need $1-i \geq 0$, i.e.\ $i \leq 1$. Thus $a_0(X)=\alpha$, $a_1(X)=\beta \in \mathbb{K}$ and $a_i(X) =0$ for $i\geq 2$. Thus
\[
\rho(X)=\alpha X + \beta Y, \mbox{ where } \alpha, \beta \in \mathbb{K}.
\]
Similarly, we determine that
\[
\rho(Y)=\gamma X + \delta Y, \mbox{ where } \gamma, \delta \in \mathbb{K}.
\]
Since $\rho\in\Aut(\mathbb{K}[X,Y])$, its Jacobian determinant equals
$\det\begin{pmatrix}\alpha & \beta\\ \gamma & \delta\end{pmatrix}
=\alpha\delta-\beta\gamma\in\mathbb{K}^{\ast}$.
\end{proof}

\begin{theorem} \label{th3.7}
\label{tlinear}
		Let $D = aX \dfrac{\partial}{\partial X} + bY \dfrac{\partial}{\partial Y}$ be a linear derivation of \, $\mathbb{K}[X,Y]$, where   $a,\, b\in \mathbb{K}^*$ and $a \neq b$. Then 
    \[
      \operatorname{Aut}(\mathbb{K}[X,Y])_D=\{(\alpha X,\beta Y) \mid \alpha,\beta\in \mathbb{K}^{\ast}\}.
    \]
\end{theorem}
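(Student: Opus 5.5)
The plan is to follow the same coefficient-comparison strategy used in Theorem~\ref{th3.6}. Let $\rho\in\operatorname{Aut}(\mathbb{K}[X,Y])_D$ and write
\[
\rho(X)=\sum_{i=0}^t a_i(X)Y^i,\qquad \rho(Y)=\sum_{j=0}^s b_j(X)Y^j .
\]
I will impose $D(\rho(X))=\rho(D(X))=a\rho(X)$ and $D(\rho(Y))=\rho(D(Y))=b\rho(Y)$. Since $D(X^kY^i)=(ka+ib)X^kY^i$, the derivation $D$ is diagonal on monomials. So both conditions decouple into conditions on each monomial $X^kY^i$ occurring in $\rho(X)$ or $\rho(Y)$.

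Equivalently, in the $Y$-expansion, the coefficients must satisfy
\[
aXa_i'(X)=(a-ib)\,a_i(X),\qquad aXb_j'(X)=(b-jb)\,b_j(X).
\]
A polynomial solution of $Xg'=\mu g$ is $\lambda X^{\mu}$ if $\mu\in\mathbb{Z}_{\ge0}$, and $0$ otherwise. This gives the following constraints on the surviving monomials.
\begin{itemize}
\item In $\rho(X)$, the monomial $X^kY^i$ survives only if $ka+ib=a$.
\item In $\rho(Y)$, the monomial $X^kY^j$ survives only if $ka+jb=b$.
\end{itemize}
The trivial solutions are $(k,i)=(1,0)$ in $\rho(X)$ and $(k,j)=(0,1)$ in $\rho(Y)$. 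These give exactly $\rho(X)=\alpha X$ and $\rho(Y)=\beta Y$. The Jacobian determinant is then $\alpha\beta$, so invertibility forces $\alpha,\beta\in\mathbb{K}^*$. The reverse inclusion is immediate: $(\alpha X,\beta Y)$ maps each weight space of $D$ to itself, so it commutes with $D$.

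The main obstacle is ruling out the nontrivial solutions $(k,i)$ of $ka+ib=a$ and $(k,j)$ of $ka+jb=b$ with $k,i,j\ge0$. Using $a\neq b$ and $a,b\neq0$, the only ways a nontrivial solution can arise are the following.
\begin{itemize}
\item $i=0$ in $\rho(X)$ forces $k=1$, and $j=1$ in $\rho(Y)$ forces $k=0$. These are the trivial solutions.
\item $k=0$ in $\rho(X)$ means $ib=a$, which requires $i=a/b\in\mathbb{Z}_{\ge2}$.
\item $j=0$ in $\rho(Y)$ means $ka=b$, which requires $k=b/a\in\mathbb{Z}_{\ge2}$.
\item $k\ge1$ and $i\ge1$ in $\rho(X)$ means $(k-1)a=-ib$, so $b/a$ is a negative rational. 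The analogous mixed case in $\rho(Y)$ is similar.
\end{itemize}

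In the last case I will show that such terms cannot survive in an automorphism. Both $\rho(X)$ and $\rho(Y)$ are weight vectors with weights $a$ and $b$. I will restrict to the degree of $\rho$ and use the weight grading to argue that any extra term makes $\rho(X)$ divisible by $X$ or by $Y$. This would contradict the fact that $\rho(X)$ and $\rho(Y)$ are coordinates, hence irreducible and together generating the ring.

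For the integer-ratio cases ($a/b$ or $b/a\in\mathbb{Z}_{\ge2}$) this divisibility argument does not apply. For instance, $(\alpha X,\beta Y+cX^k)$ with $b=ka$ is a genuine automorphism commuting with $D$. So in writing the proof I would make explicit that the stated conclusion relies on excluding this resonance, presumably intended by the hypothesis $a\ne b$ in the generic sense. I would argue the statement under that reading. I expect this resonance issue to be the delicate point.
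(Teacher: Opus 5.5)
Your analysis is right, and it is more careful than the paper's: as stated, the theorem is false. For $D=X\frac{\partial}{\partial X}+2Y\frac{\partial}{\partial Y}$ one has $D(Y+X^2)=2(Y+X^2)$, so $(X,Y+X^2)$ lies in $\operatorname{Aut}(\mathbb{K}[X,Y])_D$ but is not diagonal. Symmetrically, $(X+Y^k,Y)$ works when $a=kb$.

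The paper's proof fails because it imports the equation $Xa_i'(X)=(1-i)a_i(X)$ from Theorem~\ref{th3.6}, which is only valid for $a=b$. The correct equation is your $aXa_i'(X)=(a-ib)a_i(X)$, which has a nonzero constant solution for $i=a/b\in\mathbb{Z}_{\ge 2}$. On the $\rho(Y)$ side, the solution $b_0=cX^{b/a}$ is lost in the word ``similarly''.

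The paper is also internally inconsistent here. Since $[X^k\frac{\partial}{\partial Y},\,X\frac{\partial}{\partial X}+kY\frac{\partial}{\partial Y}]=0$, Lemma~\ref{lem:comm-exp} already puts $\exp(X^k\frac{\partial}{\partial Y})=(X,Y+X^k)$ in the isotropy group. This is the derivation-level form of the resonance noted in Remark~\ref{rem:linear-resonance}.

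So rather than reading $a\neq b$ ``generically'', state the corrected result explicitly. The isotropy group is the diagonal torus if and only if neither $a/b$ nor $b/a$ lies in $\mathbb{Z}_{\ge 2}$. For $b=ka$ with $k\ge 2$, your monomial count gives exactly $\{(\alpha X,\beta Y+cX^k)\mid \alpha,\beta\in\mathbb{K}^*,\ c\in\mathbb{K}\}$, and symmetrically for $a=kb$.

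The one step to tighten is the case $b/a<0$. ``Divisible by $X$'' is not a contradiction by itself, since $\alpha X$ is divisible by $X$, and no degree restriction is needed. Write $b/a=-r/s$ with $r,s\ge 1$ coprime. Your equations then show that the weight-$a$ monomials are exactly $X(X^rY^s)^n$ and the weight-$b$ monomials are exactly $Y(X^rY^s)^n$. Hence $\rho(X)=X\,g(X^rY^s)$ and $\rho(Y)=Y\,h(X^rY^s)$. If $g$ were nonconstant, $\rho(X)$ would be a product of two nonunits, contradicting the irreducibility of a coordinate; so $g\in\mathbb{K}^*$, and likewise $h\in\mathbb{K}^*$.

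Alternatively, set $u=X^rY^s$. The Jacobian is $gh+u\,(s\,gh'+r\,g'h)$, whose top coefficient $g_mh_n(1+sn+rm)$ never vanishes, which forces $m=n=0$. With that, your case analysis is complete.
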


\begin{proof}
The proof uses the same arguments as Theorem~\ref{th3.6}. Since $D(X)=aX$ and $D(Y)=bY$ with $a\neq b$, the equation $Xa_i'(X)=(1-i)a_i(X)$ for $\rho(X)=\sum_i a_i(X)Y^i$ forces $a_1(X)=\beta\in\mathbb{K}$, but the condition $\rho(D(X))=D(\rho(X))$ at the $Y^1$ term gives $a\beta Y = b\beta Y$, hence $\beta=0$ (since $a\neq b$). Thus $\rho(X)=\alpha X$, and similarly $\rho(Y)=\delta Y$. The Jacobian condition forces $\alpha,\delta\in\mathbb{K}^*$.
\end{proof}

\begin{theorem} \label{th3.8}
		Let $D=(aX +Y)\dfrac{\partial}{\partial X}+aY\dfrac{\partial}{\partial Y}$ be a nonzero linear derivation of $\mathbb{K}[X,Y]$, where   $a \in \mathbb{K}^*$. Then 
        \[
        \operatorname{Aut}(\mathbb{K}[X,Y])_D= \{(\alpha X +\beta Y,\,\alpha Y ) \mid \alpha \in \mathbb{K}^*,\ \beta \in \mathbb{K}\}.
        \]
\end{theorem}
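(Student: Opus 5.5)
We follow the coefficient-comparison strategy of Theorems~\ref{th3.3} and~\ref{th3.6}, but we start with $\rho(Y)$, because $Y$ is the eigenvector of $D$. Let $\rho\in\operatorname{Aut}(\mathbb{K}[X,Y])_D$. Write $D=D_s+D_n$ with
\[
D_s=a\Bigl(X\frac{\partial}{\partial X}+Y\frac{\partial}{\partial Y}\Bigr)=aE,\qquad D_n=Y\frac{\partial}{\partial X},
\]
where $E$ is the Euler derivation. For a form $h$ that is homogeneous of degree $k$, we have $D_s h=akh$. The operator $D_n$ preserves degree, and it is nilpotent on each homogeneous component.

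\textbf{Step 1: $\rho(X)$ and $\rho(Y)$ are linear.} Decompose $g=\rho(Y)=\sum_k g_k$ into homogeneous parts. The condition $D\rho(Y)=\rho(DY)=a\rho(Y)$ splits degree by degree as
\[
(ak-a)g_k+D_n g_k=0 .
\]
For $k\neq 1$ the operator $(ak-a)+D_n$ is invertible on degree-$k$ forms, since $a\neq 0$ and $D_n$ is nilpotent. Hence $g_k=0$ for $k\neq 1$, and $g$ is a linear form.

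Next set $f=\rho(X)$. The relation $D\rho(X)=\rho(DX)=a f+g$ gives, in degree $k$,
\[
(ak-a)f_k+D_n f_k=g_k .
\]
For $k\neq 1$ the right side vanishes, so $f_k=0$. Therefore both $f$ and $g$ are linear forms.

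\textbf{Step 2: the linear-algebra computation.} Now $\rho$ is linear and commutes with $D$. In matrix terms, the matrix of $\rho$ commutes with the Jordan block $J=\begin{bmatrix} a&1\\0&a\end{bmatrix}$. This is equivalent to commuting with the nilpotent part $N=J-aI$.

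We do the explicit check. Write $g=\gamma X+\delta Y$. Then $D(g)=\gamma(aX+Y)+a\delta Y$, and this must equal $a\gamma X+a\delta Y$. So $\gamma=0$, and $\rho(Y)=\delta Y$.

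Write $f=\alpha X+\beta Y$. Then $D(f)=\alpha(aX+Y)+a\beta Y$, and this must equal $a(\alpha X+\beta Y)+\delta Y$. So $\alpha=\delta$.

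The Jacobian condition then forces $\alpha\in\mathbb{K}^*$. Conversely, a direct check shows that every $(\alpha X+\beta Y,\alpha Y)$ commutes with $D$. This gives the stated group.

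\textbf{Main obstacle.} The only subtle point is Step~1, namely ruling out nonlinear terms. The clean way is the invertibility of $(a(k-1))\,\mathrm{id}+D_n$ on each space of degree-$k$ forms for $k\neq 1$. This needs $a\neq0$, which is assumed in the statement.

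One could instead mimic Theorem~\ref{th3.6} by expanding in powers of $Y$. That route leads to the coupled system $a X a_i' + (i+1)a_{i+1}\cdot(\ldots)$, where the term $Y\partial_X$ shifts indices. It could be handled by descending induction on $i$, but the homogeneous-component argument avoids that bookkeeping.
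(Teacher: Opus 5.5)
Your proof is correct, but it takes a different route from the paper's.

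The paper expands $\rho(X)=\sum_i a_i(X)Y^i$ and $\rho(Y)=\sum_j b_j(X)Y^j$ in powers of $Y$ and compares coefficients. Since $D$ does not preserve $Y$-degree (the term $Y\partial_X$ raises it by one), the equations it gets are coupled: the $Y^j$-equation $a(1-j)b_j=aXb_j'+b_{j-1}'$ involves the previous coefficient. The paper solves them by an upward induction, starting from $Xb_0'=b_0$ and then using the scalar equations $Xb'=(1-j)b$.

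You instead use the total-degree grading, which a linear derivation preserves, together with $D=aE+Y\partial_X$. On degree-$k$ forms, $D-a\,\mathrm{id}=a(k-1)\,\mathrm{id}+Y\partial_X$ is a nonzero scalar plus a nilpotent when $k\neq 1$. So all constant and nonlinear components of $\rho(Y)$, and then of $\rho(X)$, vanish in one step, and what remains is a $2\times 2$ commutant computation.

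What each approach buys:
\begin{itemize}
\item Your argument is shorter. It makes the role of $a\neq 0$ transparent: no weight $ka$ with $k\neq 1$ coincides with $a$. It also works verbatim for Theorem~\ref{th3.6}.
\item The paper's coefficient method is more hands-on. Its advantage is that it is the same technique the paper uses for the nonlinear normal forms in Theorems~\ref{th3.2}--\ref{th3.4}, where $D$ is in general not homogeneous for the total degree.
\end{itemize}

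Your computation also lands on the correct $(\alpha X+\beta Y,\alpha Y)$, whereas the paper's last line has the slip $(\alpha X+\beta,\alpha Y)$.

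Your closing aside describes the coefficient route slightly inaccurately. The coupling is through $a_{i-1}'$ in the $Y^i$-equation, not through $a_{i+1}$, and the natural induction runs upward from $Y^0$, not downward. This does not affect your proof.
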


\begin{proof}
Assume that $\rho$ is an automorphism on $\mathbb{K}[X,Y]$ defined by
\[
\rho(X) = \sum_{i=0}^t a_i(X) Y^i, \qquad \rho(Y) = \sum_{j=0}^s b_j(X) Y^j,
\]
where  $a_i(X), b_j(X) \in \mathbb{K}[X]$. We want to determine when $\rho D = D \rho$. By definition, we have
\[
\rho(D(Y)) = a \rho(Y) = a \sum_{j=0}^s b_j(X) Y^j
\]
and
\begin{align*}
D(\rho(Y)) 
&= D\left( \sum_{j=0}^s b_j(X) Y^j \right)
=\sum_{j=0}^sab_j^{'}(X)XY^j+\sum_{j=0}^sb_j^{'}(X)Y^{j+1} + \sum_{j=1}^sjab_j(X) Y^j.
\end{align*}
Since $\rho(D(Y)) = D(\rho(Y))$, equating coefficients of each power of $Y$ yields:

\begin{equation*}
    \begin{cases} 
    b_0(X) = b_0'(X) X,\\
    b_s'(X) = 0, \\ 
   a b_j(X) =  a b_j'(X) X + b_{j-1}'(X) + ajb_j(X), \quad 0<j<s.\\
    \end{cases}
    \end{equation*}

The first equation $b_0(X)=Xb_0'(X)$ has general polynomial solution $b_0(X)=cX$ for some $c\in\mathbb{K}$. Substituting into the equation for $j=1$ (with $b_0'(X)=c$), we obtain  
\[
ab_1(X) = ab_1'(X)X + b_0'(X) + ab_1(X). 
\]
This implies  $0 = ab_1'(X)X + c$. Then $c=0$ and $b_1'(X)=0$, giving $b_0(X)=0$ and $b_1(X)=\alpha\in\mathbb{K}$. For $j\geq 2$, the equation $a(1-j)b_j(X) = ab_j'(X)X + b_{j-1}'(X)$ with $b_{j-1}'=0$ (established inductively) gives $a(1-j)b_j(X)=ab_j'(X)X$, i.e.\ $Xb_j'(X)=(1-j)b_j(X)$, whose only polynomial solution for $j\geq 2$ is $b_j(X)=0$. Thus
\[
\rho(Y) = \alpha Y, \quad \alpha \in \mathbb{K}^*.
\]

Now compute 
\[
\rho(D(X)) = \rho(aX + Y) = a \rho(X) + \alpha Y=\sum_{i=0}^t aa_i(X) Y^i+\alpha Y.
\]
On the other hand,
\begin{align*}
D(\rho(X)) 
&= \sum_{i=0}^t aa_i^{'}(X)XY^i+\sum_{i=0}^t a_i^{'}(X)Y^{i+1} + \sum_{i=1}^t iaa_i(X)Y^i.
\end{align*}
From $\rho(D(X))=D(\rho(X))$, equating coefficients for each power of $Y$ gives the system:
\begin{equation*} 
    \begin{cases}
    a_s'(X) = 0,\\
    a_0(X) = a_0'(X) X,\\
    \alpha = a a_1'(X) X + a_0'(X), \\ 
   a(1-i) a_i(X) = a a_i'(X) X + a_{i-1}'(X), \quad 1<i\leq s.\\
    \end{cases}
    \end{equation*}
Solving this system gives $a_0(X)=\gamma X$ and  $\alpha = a\cdot 0 + \gamma = \gamma$. So $a_0(X)=\alpha X$. From the equations for $i\geq 2$ one finds $a_i(X)=0$, and the  equation for $i=1$ we obtain $a_1(X)=\beta\in\mathbb{K}$. Therefore, 
\[
\rho=(\alpha X + \beta, \, \alpha Y ), \mbox{ where } \quad \alpha,\beta \in \mathbb{K} \mbox{ and }\ \alpha\neq 0.
\]
\end{proof}

In particular, Proposition ~\ref{cor:nontrivial} applies to each of the four normal forms of Lemma~\ref{lfdclas}, confirming that no nonzero locally finite derivation on $\mathbb{K}[X,Y]$ has a trivial isotropy group.

\section{Isotropy groups for exponential automorphisms}\label{sec:isotropyLFA}
 
In this section we specialize to $\mathbb{K}=\mathbb{C}$. This restriction is necessary because the exponential map $\exp(D)$ is defined via the complex exponential function $e^\lambda$, and key injectivity properties (used in Propositions~\ref{prop:LND-equality} and~\ref{prop:criterion-equality}) rely on analytic properties of $e^\lambda$ over $\mathbb{C}$. All results of Section~\ref{sec:isotropyLFD} apply in particular over $\mathbb{C}$.
 
\subsection{The exponential map for locally finite derivations}
 
Let $D\in \operatorname{LFD}(B)$ and let $b\in B$. Consider the $\mathbb{C}$-vector subspace
$V_b:=\operatorname{span}_{\mathbb{C}}\{D^n(b)\mid n\geq 0\}\subset B$.
Since $D$ is locally finite, the space $V_b$ is finite-dimensional. Furthermore, $V_b$ is stable under $D$, because $D(D^n(b))=D^{n+1}(b)\in V_b$ for all $n\geq 0$.
Hence the restriction $D_b:=D|_{V_b}\in \operatorname{End}_{\mathbb{C}}(V_b)$
is a well-defined linear operator on the finite-dimensional vector space $V_b$.
 
We may therefore define
$\exp(D_b):=\displaystyle\sum_{n\geq 0}\frac{D_b^n}{n!}\in \operatorname{End}_{\mathbb{C}}(V_b)$,
and then set $\exp(D)(b):=\exp(D_b)(b)\in V_b\subset B$.
In this way, we obtain a well-defined map $\exp(D)\colon B\to B$. It is standard that $\exp(D)$ is a $\mathbb{C}$-algebra automorphism, and that its inverse is $\exp(-D)$ (see~\cite{Maubach03}).
 
Moreover, for every $\varphi\in \operatorname{Aut}(B)$, the following identity holds:
\begin{equation}\label{eq:conj-exp}
\varphi\,\exp(D)\,\varphi^{-1}=\exp(\varphi D\varphi^{-1}).
\end{equation}
Consequently, we obtain the commutative diagram
\begin{multicols}{2}
$
\begin{array}[c]{ccc}
\nonumber \operatorname{Aut}(B)\times \operatorname{LFD}(B)&\xrightarrow{} &\operatorname{LFD}(B)\\
\nonumber \Big\downarrow\scriptstyle{\operatorname{id}\times\exp(-)} &\circlearrowleft&\Big\downarrow\scriptstyle{\exp(-)}\\
\nonumber \operatorname{Aut}(B)\times \operatorname{LFA}(B)&\xrightarrow{}&\operatorname{LFA}(B)
\end{array}
\begin{array}[c]{ccc}
\nonumber (\varphi, D)&\to &\varphi D\varphi^{-1}\\
\nonumber \Big\downarrow &\circlearrowleft &\Big\downarrow\\
\nonumber (\varphi,\exp(D))&\to &\varphi\exp(D)\varphi^{-1}=\exp(\varphi D\varphi^{-1})
\end{array}
$
\end{multicols}
 
\subsection{Isotropy of $D$ and of $\exp(D)$}
 
For $D\in \operatorname{LFD}(B)$, define 
\[
\Aut(B)_D=\{\varphi\in\Aut(B)\mid \varphi D\varphi^{-1}=D\} \mbox{ and } 
\]
\[
\Aut(B)_{\exp(D)}=\{\varphi\in\Aut(B)\mid \varphi\exp(D)\varphi^{-1}=\exp(D)\}
\]
By~\eqref{eq:conj-exp}, we always have $\Aut(B)D\subseteq \Aut(B){\exp(D)}$. In general, equality need not hold without additional hypotheses, since the exponential map is not injective on semisimple parts over $\mathbb{C}$.
 
\begin{lemma}\label{lem:comm-exp}
Let $D,D'\in\operatorname{LFD}(B)$. If $[D',D]=0$, then $\exp(D')\in\Aut(B)_D$.
Conversely, if $D'\in\LND(B)$ and $\exp(D')\in\Aut(B)_D$, then $[D',D]=0$.
\end{lemma}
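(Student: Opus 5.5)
For the first assertion I will show directly that $\exp(D')$ commutes with $D$ when $[D',D]=0$. Fix $b\in B$ and let $W$ be the finite-dimensional span of all $D'^{\,i}D^{j}(b)$ with $i,j\ge 0$. This space is finite-dimensional because the two operators commute: we have $D'^{\,i}D^j(b)=D^jD'^{\,i}(b)$. It is stable under both $D$ and $D'$. It is also contained in a finite-dimensional subspace obtained by first taking $V_b$ for $D$ and then saturating under the locally finite $D'$, using commutativity to keep it $D$-stable. On $W$ the restrictions $D|_W$ and $D'|_W$ are commuting linear operators, so $D|_W$ commutes with every power of $D'|_W$. It therefore commutes with the finite-dimensional matrix exponential $\exp(D'|_W)$. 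Since $\exp(D')$ agrees with $\exp(D'|_W)$ on $W$ (by the construction via $V_b\subseteq W$ given above), we get $D\exp(D')(b)=\exp(D')D(b)$. This holds for all $b$, so $\exp(D')\in\Aut(B)_D$.

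For the converse, assume $D'$ is locally nilpotent and $\exp(D')D\exp(-D')=D$. My plan is to use the one-parameter family $\sigma_t:=\exp(tD')$, $t\in\mathbb{C}$, together with the identity $\exp(D')^n=\exp(nD')$ for integers $n$, which holds because $D'$ commutes with itself. From the hypothesis, $\sigma_n$ commutes with $D$ for every $n\in\mathbb{Z}$. Fix $b\in B$. By local nilpotence, $t\mapsto \sigma_t D\sigma_{-t}(b)$ is a polynomial in $t$ with coefficients in a finite-dimensional subspace of $B$. Indeed, $\sigma_{-t}(b)=\sum_k (-t)^kD'^{\,k}(b)/k!$ is a finite sum, and applying $D$ and then $\sigma_t$ again gives a finite sum, since each $D D'^{\,k}(b)$ is killed by a high power of $D'$. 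This polynomial equals $D(b)$ at every integer $t$. A polynomial with values in a finite-dimensional vector space that takes the same value at infinitely many points is constant, so $\sigma_tD\sigma_{-t}(b)=D(b)$ for all $t$.

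Differentiating at $t=0$ (equivalently, reading off the coefficient of $t$) gives $D'D(b)-DD'(b)=0$. Hence $[D',D]=0$.

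The main obstacle I expect is the finite-dimensionality bookkeeping in the first part: I need to produce a common finite-dimensional subspace stable under both $D$ and $D'$, and to confirm that the exponential defined via $V_b$ agrees with the restriction to this larger space. In the converse, the only point needing care is justifying $\exp(D')^n=\exp(nD')$ and the polynomiality in $t$, and both are immediate in the locally nilpotent setting. The converse genuinely needs nilpotence: for semisimple $D'$ the family $\exp(tD')$ need not be polynomial in $t$, which is why equality of the two isotropy groups can fail later.
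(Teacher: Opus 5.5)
Your proof is correct, but both halves take a different route from the paper. For the first assertion, the paper invokes the identity $\exp(D')\,D\,\exp(-D')=\exp(\operatorname{ad}_{D'})(D)$ and uses $\operatorname{ad}_{D'}(D)=0$. You instead restrict to $W=\operatorname{span}\{D'^{\,i}D^{j}(b)\}$. This space is finite-dimensional because it is the $D'$-saturation of the finite-dimensional $V_b$, and it is $D$-stable by commutativity; you then use that commuting operators on a finite-dimensional space have commuting exponentials. This is more self-contained, since for a merely locally finite $D'$ the adjoint identity on $\operatorname{Der}(B)$ itself needs justification. One precision: $\exp(D')$ agrees with $\exp(D'|_W)$ on $W$ because the $D'$-cyclic span of every $w\in W$ lies in $W$. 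That is the relevant inclusion, not the $D$-cyclic space $V_b$.

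For the converse, the paper works in the span of the $\operatorname{ad}_{D'}^{n}(D)$, where $N=\operatorname{ad}_{D'}$ is nilpotent. It writes $\exp(N)(D)-D=N\,Q(N)(D)$ with $Q(N)=\sum_j N^j/(j+1)!$ invertible, and so extracts $N(D)=0$ from the single time $t=1$. You instead use the group law $\exp(D')^{n}=\exp(nD')$ to get $\sigma_tD\sigma_{-t}(b)=D(b)$ at every integer $t$. The left side is the polynomial $\sum_n \frac{t^n}{n!}\operatorname{ad}_{D'}^{n}(D)(b)$, so it is constant and its linear coefficient $[D',D](b)$ vanishes. Your argument works elementwise, so it never needs local nilpotence of $\operatorname{ad}_{D'}$ on $\operatorname{Der}(B)$, which the paper asserts without proof and which uses that $B$ is finitely generated. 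It also shows clearly where nilpotence enters: for semisimple $D'$ the analogous function of $t$ is an exponential polynomial such as $e^{2\pi i t}-1$, which vanishes at all integers without vanishing identically. The paper's version is shorter once the adjoint identity is granted, and it needs only one value of $t$.
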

 
\begin{proof}
If $[D',D]=0$, then $\operatorname{ad}_{D'}(D)=0$, and hence
$
\exp(D')\,D\,\exp(-D')
=
\exp(\operatorname{ad}_{D'})(D)
=
D$, so $\exp(D')\in\Aut(B)_D$.
 
Conversely, assume $D'\in\LND(B)$ and $\exp(D')D\exp(-D')=D$. Then $\exp(\operatorname{ad}_{D'})(D)=D$.
Since $D'$ is locally nilpotent, $\operatorname{ad}_{D'}$ is locally nilpotent on $\operatorname{Der}_{\mathbb{C}}(B)$; in particular it is nilpotent on the finite-dimensional space $W=\operatorname{Span}_{\mathbb{C}}\{\operatorname{ad}_{D'}^{n}(D)\mid n\ge 0\}$.
Hence $\exp(\operatorname{ad}_{D'})=\sum_{j=0}^{N}\frac{1}{j!}\operatorname{ad}_{D'}^{j}$ on $W$. From $\exp(\operatorname{ad}_{D'})(D)=D$ we get $\sum_{j\ge 1}\frac{1}{j!}\operatorname{ad}_{D'}^{j}(D)=0$.
Factoring $\operatorname{ad}_{D'}$, this becomes
$\operatorname{ad}_{D'}\!\left(\sum_{j\ge 0}\frac{1}{(j+1)!}\operatorname{ad}_{D'}^{j}(D)\right)=0$.
The operator $Q(N):=\sum_{j\ge 0}\frac{N^j}{(j+1)!}$ (where $N=\operatorname{ad}_{D'}|_W$) satisfies $Q(0)=1$ and $N$ is nilpotent on $W$, so $Q(N)$ is invertible on $W$ by the Neumann series. We conclude $N(D)=\operatorname{ad}_{D'}(D)=0$, i.e.\ $[D',D]=0$.
\end{proof}
 
\begin{example}
For $D=X\frac{\partial}{\partial X}$ we have
$\big[\frac{\partial}{\partial Y},\,X\frac{\partial}{\partial X}\big]=0$.
Thus $\exp\!\big(\frac{\partial}{\partial Y}\big)=(X,Y+1)\in \Aut(\mathbb{C}[X,Y])_D$.
\end{example}
 
\begin{example}
Consider the derivation $D=\tfrac{\partial}{\partial X}\in \operatorname{Der}(\mathbb{C}[X,Y])$.  
Its isotropy group is
\[
\operatorname{Aut}_{\frac{\partial}{\partial X}}(\mathbb{C}[X,Y])
=\left\{\,(X+p(Y),\,aY+b)\;\middle|\; p(Y)\in \mathbb{C}[Y],\; a\in \mathbb{C}^*,\; b\in \mathbb{C}\,\right\}.
\]
Indeed, if $\rho\in\Aut(\mathbb{C}[X,Y])$ satisfies $\rho\dfrac{\partial}{\partial X}=\dfrac{\partial}{\partial X}\rho$, then
$\frac{\partial}{\partial X}(\rho(X))=1$ forces $\rho(X)=X+p(Y)$ with $p\in\mathbb{C}[Y]$, and
$\frac{\partial}{\partial X}(\rho(Y))=0$ forces $\rho(Y)\in\mathbb{C}[Y]$. The Jacobian condition gives
$\rho(Y)=aY+b$ with $a\in\mathbb{C}^*$, $b\in\mathbb{C}$.
We have the factorization
\[
(X+p(Y),\,aY+b)
= \exp\!\left(p(Y)\dfrac{\partial}{\partial X}\right)\,
  \exp\!\left(\dfrac{b}{a}\,\dfrac{\partial}{\partial Y}\right)\,
  \exp\!\left(\lambda\,Y\dfrac{\partial}{\partial Y}\right),
\]
where $\lambda\in\mathbb{C}$ satisfies $e^\lambda=a$, and each factor is the exponential of a derivation commuting with $\dfrac{\partial}{\partial X}$:
\[
\left[p(Y)\dfrac{\partial}{\partial X},\dfrac{\partial}{\partial X}\right]=\left[\dfrac{b}{a}\dfrac{\partial}{\partial Y},\dfrac{\partial}{\partial X}\right]=\left[\lambda Y\dfrac{\partial}{\partial Y},\dfrac{\partial}{\partial X}\right]=0.
\]
\end{example}
 
The inclusion $\Aut(B)_D\subseteq \Aut(B)_{\exp(D)}$ becomes an equality in several important cases.
We record the locally nilpotent case, which is the one used most often below.
 
\begin{proposition}\label{prop:LND-equality}
If $D\in\LND(B)$, then $\Aut(B)_D=\Aut(B)_{\exp(D)}$.
\end{proposition}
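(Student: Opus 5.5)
The inclusion $\Aut(B)_D\subseteq\Aut(B)_{\exp(D)}$ is already available from \eqref{eq:conj-exp}. For the reverse inclusion, take $\varphi\in\Aut(B)$ with $\varphi\exp(D)\varphi^{-1}=\exp(D)$ and set $D':=\varphi D\varphi^{-1}$. Then $D'$ is again locally nilpotent, since $D'^n=\varphi D^n\varphi^{-1}$. By \eqref{eq:conj-exp} the hypothesis reads $\exp(D')=\exp(D)$. It therefore suffices to show that $\exp$ is injective on $\LND(B)$: from $\exp(D')=\exp(D)$ we want to conclude $D'=D$, and this gives $\varphi\in\Aut(B)_D$.

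To prove injectivity, I would recover $D$ from $\sigma:=\exp(D)$ by the logarithm series. The key observation is that $\sigma-\mathrm{id}$ is locally nilpotent whenever $D$ is. Fix $b\in B$ and let $V_b=\operatorname{span}\{D^n(b)\}$, which is finite-dimensional and $D$-stable, with $D|_{V_b}$ nilpotent. Then $\sigma|_{V_b}=\exp(D|_{V_b})$ and $\sigma|_{V_b}-\mathrm{id}=D|_{V_b}\,Q(D|_{V_b})$, where $Q(T)=\sum_{j\ge0}T^j/(j+1)!$. Since $D|_{V_b}$ and $Q(D|_{V_b})$ commute, $\sigma|_{V_b}-\mathrm{id}$ is nilpotent on $V_b$. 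So for each $b$ the series
\[
\log(\sigma)(b):=\sum_{k\ge1}\frac{(-1)^{k+1}}{k}(\sigma-\mathrm{id})^k(b)
\]
is a finite sum. On $V_b$ the identity $\log(\exp(N))=N$ for a nilpotent matrix $N$ is the standard formal power-series identity $\log(\exp T)=T$ in $\mathbb{C}[[T]]$. Both sides are polynomials in $N$ once truncated beyond the nilpotency index, so the identity holds on $V_b$. Hence $\log(\sigma)(b)=D(b)$ for all $b$.

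Now apply this to both $D$ and $D'$. For each $b$, work in the finite-dimensional space $V_b+V'_b$, where $V'_b$ is the analogous space for $D'$. Alternatively, note directly that $\log(\sigma)(b)$ is defined intrinsically in terms of $\sigma$ alone, without reference to $D$. Since $\exp(D)=\exp(D')=\sigma$, we get $D(b)=\log(\sigma)(b)=D'(b)$ for every $b\in B$, so $D=D'$. This completes the argument.

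The step I expect to need the most care is the intrinsic well-definedness of $\log(\sigma)$. The series must terminate at each $b$ using only local nilpotence of $\sigma-\mathrm{id}$, which was established above independently of which derivation produced $\sigma$. With that in place, the value $\log(\sigma)(b)$ is simply a finite sum determined by $\sigma$. As an alternative route, one could instead use Lemma~\ref{lem:comm-exp}, but the logarithm argument is more direct.
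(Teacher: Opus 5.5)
Your proof is correct, but it establishes injectivity of $\exp$ on $\LND(B)$ by a different mechanism than the paper.

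The paper sets $E=\varphi D\varphi^{-1}$ and truncates $\exp(D)(b)$ and $\exp(E)(b)$ to finite sums. It then reads off $D(b)=E(b)$ as the coefficient of $t$ in the polynomial identity $\exp(tD)(b)=\exp(tE)(b)$. The hypothesis only gives this identity at $t=1$. To make that route rigorous one has to add two facts: $\exp(nD)=\exp(D)^n=\exp(E)^n=\exp(nE)$ for every $n\in\mathbb{Z}$, and a $B$-valued polynomial in $t$ that vanishes on $\mathbb{Z}$ is identically zero.

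You instead construct an explicit left inverse, the truncated series $\log(\sigma)$. It is defined from $\sigma$ alone once you know $\sigma-\mathrm{id}$ is locally nilpotent, and it returns $D(b)$ via the formal identity $\log(\exp T)=T$ applied to the nilpotent operator $D|_{V_b}$. This uses only the single value $t=1$ and needs no one-parameter-group property. It also yields slightly more: $\exp$ is injective on $\LND(B)$, with $\log$ recovering the derivation. The paper's interpolation argument is shorter, once the integer-power step is supplied.

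Both arguments work over any field of characteristic zero, so the restriction to $\mathbb{C}$ plays no role for this statement.
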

 
\begin{proof}
Let $\varphi\in\Aut(B)_{\exp(D)}$ and set $E=\varphi D\varphi^{-1}$. Since conjugation by an automorphism preserves local nilpotency ($D^n(b)=0$ $\implies$ $E^n(\varphi(b))=\varphi(D^n(b))=0$) we have $E\in\LND(B)$.
Then $\exp(E)=\exp(D)$. For any $b\in B$ there exists $N\gg 0$ such that
$D^{N+1}(b)=E^{N+1}(b)=0$. Hence $\exp(D)(b)=\sum_{j=0}^N\frac1{j!}D^j(b)$ and $\exp(E)(b)=\sum_{j=0}^N\frac1{j!}E^j(b)$.
Since $\exp(E)=\exp(D)$, these sums are equal. Comparing the coefficients of $t$ in the polynomial identity
$\exp(tD)(b)=\exp(tE)(b)$ at $t=0$ yields $D(b)=E(b)$ for all $b$, hence $D=E$ and
$\varphi\in\Aut(B)_D$.
\end{proof}

\begin{example}\label{ex:semisimple-diff}
Let $B=\mathbb{C}[X,Y]$ and consider the semisimple derivation $D=2\pi i\,X\frac{\partial}{\partial X}\in \operatorname{Der}(B)$.
 
Since $D(X)=2\pi i\,X$ and $D(Y)=0$, we have $\exp(D)(X)=e^{2\pi i}X=X$ and $\exp(D)(Y)=Y$,
hence $\exp(D)=\mathrm{id}_B$. Therefore, $\Aut_{\exp(D)}(B)=\Aut_{\mathrm{id}}(B)=\Aut(B)$.
 
On the other hand, if $\varphi\in\Aut(B)$ satisfies $\varphi D=D\varphi$, then
\[
D(\varphi(X))=\varphi(D(X))=2\pi i\,\varphi(X), \qquad
D(\varphi(Y))=\varphi(D(Y))=0.
\]
Writing $\varphi(X)=\sum_{m,n} c_{m,n}X^mY^n$, we have
$D(X^mY^n)=2\pi i\,m\,X^mY^n$, so the identity \linebreak 
$D(\varphi(X))=2\pi i\,\varphi(X)$ forces $c_{m,n}=0$ unless $m=1$.
Hence $\varphi(X)=X\,g(Y)$ for some $g(Y)\in\mathbb{C}[Y]$, and since $\varphi(Y)\in\mathbb{C}[Y]$ (from $D(\varphi(Y))=0$), write $\varphi(Y)=h(Y)$ for some $h\in\mathbb{C}[Y]$. The Jacobian condition gives
\[
\det\begin{pmatrix} g(Y) & Xg'(Y) \\ 0 & h'(Y) \end{pmatrix} = g(Y)\,h'(Y) \in \mathbb{C}^*,
\]
which requires both $g(Y)\in\mathbb{C}^*$ and $h'(Y)\in\mathbb{C}^*$: since $g,h'\in\mathbb{C}[Y]$ and their product is a nonzero constant, each must individually be a nonzero constant. Thus $\varphi(X)=cX$ with $c\in\mathbb{C}^*$, and \linebreak $\varphi(Y)=aY+b$ with $a\in\mathbb{C}^*$, $b\in\mathbb{C}$.
Consequently,
\[
\operatorname{Aut}_D(B)=\{(cX,\; aY+b)\mid c,a\in\mathbb{C}^{\ast},\ b\in\mathbb{C}\} \subsetneq \Aut(B)=\Aut_{\exp(D)}(B).
\]
In particular, for locally finite derivations with nontrivial semisimple part the equality \linebreak
$\Aut_D(B)=\Aut_{\exp(D)}(B)$ can fail.
\end{example}
 
\begin{remark}
The phenomenon in Example~\ref{ex:semisimple-diff} is intrinsic to the semisimple part of a locally finite derivation. Over $\mathbb{C}$, the exponential map on semisimple derivations is not injective: if $D_s$ has eigenvalues in $2\pi i\,\mathbb{Z}$ on all weight spaces, then $\exp(D_s)=\mathrm{id}$ even though $D_s\neq 0$. Thus different locally finite derivations may have the same exponential automorphism, and the isotropy of $D$ can be strictly smaller than the isotropy of $\exp(D)$. This shows that, in general, one only has
\[
\Aut_D(B)\subseteq \Aut_{\exp(D)}(B),
\]
and equality requires additional hypotheses (for instance, $D$ locally nilpotent, or the injectivity condition of Proposition~\ref{prop:criterion-equality} below).
\end{remark}
 
\begin{proposition}\label{prop:criterion-equality}
Let $B$ be an affine $\mathbb{C}$-domain and let $D\in\operatorname{LFD}(B)$ with Jordan decomposition $D=D_s+D_n$.
Assume that there exists a subset $\Omega\subset\mathbb{C}$ such that:
\begin{enumerate}
\item for every $b\in B$, all eigenvalues of the semisimple part $D_s|_{V_b}$ belong to $\Omega$;
\item the map $\exp\colon\Omega\to\mathbb{C}^*$ is injective, i.e., for all $\omega_1,\omega_2\in\Omega$, $\exp(\omega_1)=\exp(\omega_2)$ implies $\omega_1=\omega_2$.
\end{enumerate}
Then $\Aut_D(B)=\Aut_{\exp(D)}(B)$.
\end{proposition}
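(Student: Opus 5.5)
The plan is to show the inclusion $\Aut_{\exp(D)}(B)\subseteq\Aut_D(B)$; the reverse inclusion already follows from~\eqref{eq:conj-exp}. As in the proof of Proposition~\ref{prop:LND-equality}, take $\varphi\in\Aut_{\exp(D)}(B)$ and set $E=\varphi D\varphi^{-1}$. By~\eqref{eq:conj-exp}, $\exp(E)=\exp(D)$, and it suffices to prove $E=D$. Since conjugation preserves semisimplicity, local nilpotency and commutators, the Jordan decomposition of $E$ is $E=E_s+E_n$ with $E_s=\varphi D_s\varphi^{-1}$ and $E_n=\varphi D_n\varphi^{-1}$. I will compare the semisimple parts and the nilpotent parts separately.

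\textbf{Step 1: separating the two parts.} Since $[D_s,D_n]=0$, we have $\exp(D)=\exp(D_s)\circ\exp(D_n)$. Here $\exp(D_s)$ is semisimple: it acts by $e^{\lambda}$ on each weight vector of $D_s$. The factor $\exp(D_n)$ is unipotent, and the two factors commute. The same holds for $E$. The Jordan decomposition $\sigma=\sigma_s\circ\sigma_u$ of the locally finite automorphism $\sigma:=\exp(D)=\exp(E)$ is unique; this is standard, and I would check it on each finite-dimensional $\sigma$-stable subspace. Uniqueness gives
\[
\exp(D_s)=\sigma_s=\exp(E_s)\quad\text{and}\quad\exp(D_n)=\sigma_u=\exp(E_n).
\]

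\textbf{Step 2: the nilpotent parts.} From $\exp(D_n)=\exp(E_n)$ with $D_n,E_n\in\LND(B)$, the argument of Proposition~\ref{prop:LND-equality} applies verbatim and gives $D_n=E_n$. Alternatively, apply the locally finite logarithm $\log\sigma_u=\sum_{k\ge1}\frac{(-1)^{k+1}}{k}(\sigma_u-1)^k$, which recovers $D_n$ and $E_n$ from $\sigma_u$.

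\textbf{Step 3: the semisimple parts.} Since $D_s$ is semisimple, $B=\bigoplus_{\lambda}B_\lambda(D_s)$, where $B_\lambda(D_s)=\ker(D_s-\lambda)$. By hypothesis (1), every $\lambda$ occurring in this decomposition lies in $\Omega$. The operator $\sigma_s$ acts on $B_\lambda(D_s)$ by $e^\lambda$. By hypothesis (2), distinct $\lambda$ give distinct $e^\lambda$, so
\[
B_\lambda(D_s)=\ker(\sigma_s-e^\lambda)\quad\text{for every }\lambda\in\Omega.
\]
The eigenvalues of $E_s$ on each $V_{\varphi(b)}$ coincide with those of $D_s$ on $V_b$, because the restrictions are conjugate by $\varphi$. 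Hence the eigenvalues of $E_s$ also lie in $\Omega$, and the same reasoning gives $B_\mu(E_s)=\ker(\sigma_s-e^\mu)$ for every $\mu\in\Omega$. Now fix an eigenvalue $c$ of $\sigma_s$. By injectivity there is a unique $\omega\in\Omega$ with $e^\omega=c$, and on $\ker(\sigma_s-c)$ both $D_s$ and $E_s$ act as multiplication by $\omega$. These eigenspaces span $B$, so $D_s=E_s$. Combined with Step 2 this gives $D=E$, that is, $\varphi\in\Aut_D(B)$.

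The main obstacle is Step 1, namely justifying uniqueness of the Jordan decomposition for locally finite automorphisms of $B$. I would reduce it to linear algebra: $\sigma_s$ and $\sigma_u$ are polynomials in $\sigma$ on every finite-dimensional $\sigma$-stable subspace, so any commuting semisimple--unipotent factorization must agree with them there. This reduction requires a finite-dimensional subspace containing a given $b$ that is stable under $\sigma$, $\exp(D_s)$ and $\exp(D_n)$ simultaneously. Such a subspace exists, for instance the sum of the spaces $V_b$ generated under the commuting locally finite operators $D_s$ and $D_n$.
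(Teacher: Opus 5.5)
Your proof is correct, and it follows a genuinely different, and more careful, route than the paper.

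\textbf{The paper's route.} It fixes $b$ and works on the single space $W_b=\operatorname{Span}_{\mathbb{C}}\{D^n(b),E^n(b)\mid n\ge 0\}$, which it asserts is stable under both $D$ and $E$. It then applies uniqueness of the Jordan--Chevalley decomposition to $\exp(D|_{W_b})=\exp(E|_{W_b})$. Finally it uses injectivity of $\exp$ on semisimple operators with spectrum in $\Omega$ and on nilpotent operators.

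\textbf{The gap in that route.} The stability claim is not automatic, since $D(E^n(b))$ need not lie in $W_b$. In fact, two conjugate locally finite derivations need not share any finite-dimensional invariant subspace containing $b$. For the conjugate pair $X^2\partial_Y$, $Y^2\partial_X$ on $\mathbb{C}[X,Y]$, applying them alternately to $b=Y$ produces monomials of unbounded degree. The paper does not explain how the hypothesis $\exp(D)=\exp(E)$ would repair this.

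\textbf{Your route.} You never need a common invariant subspace for $D$ and $E$. You pass through the intrinsic multiplicative Jordan decomposition of the single locally finite automorphism $\sigma=\exp(D)=\exp(E)$. You identify $\exp(D_s)=\sigma_s$ on the $D$-cyclic spaces and $\exp(E_s)=\sigma_s$ on the $E$-cyclic spaces, separately. You then read off $D_s$ and $E_s$ from the eigenspaces of $\sigma_s$, using injectivity of $\exp$ on $\Omega$. You also justify explicitly why the spectrum of $E_s$ lies in $\Omega$, namely by conjugation with $\varphi$; the paper simply attributes this to hypothesis (1).

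\textbf{What each buys.} The paper's argument is shorter but rests on the unjustified stability claim. Yours is slightly longer, since you must check that $\sigma_s$ is well defined across nested $\sigma$-stable subspaces, but every step is sound.

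\textbf{One recommendation for Step 2.} Rely on $\log\sigma_u$ rather than on reusing the argument of Proposition~\ref{prop:LND-equality} ``verbatim''. That argument, as written, invokes $\exp(tD)(b)=\exp(tE)(b)$ for all $t$. This does not follow directly from $\exp(D)=\exp(E)$. It follows only after noting that both sides are polynomials in $t$ that agree at every integer $t$, because $\exp(kD)=\exp(D)^k$.
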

 
\begin{proof}
The inclusion $\Aut(B)_D\subseteq\Aut(B)_{\exp(D)}$ always holds by~\eqref{eq:conj-exp}.
Let $\varphi\in\Aut(B)_{\exp(D)}$ and set $E:=\varphi D\varphi^{-1}\in\operatorname{LFD}(B)$ with Jordan decomposition $E=E_s+E_n$.
Then $\exp(E)=\exp(D)$.
 
Fix $b\in B$ and consider the finite-dimensional $\mathbb{C}$-space
$W_b:=\operatorname{Span}_\mathbb{C}\{D^n(b),\,E^n(b)\mid n\ge0\}$,
which is stable under both $D$ and $E$, and hence under $D_s$, $D_n$, $E_s$, $E_n$. Restricting to $W_b$ we get
\[
\exp(E|_{W_b})=\exp(D|_{W_b})\in\operatorname{GL}(W_b).
\]
Since $[D_s,D_n]=0$, we have $\exp(D|_{W_b})=\exp(D_s|_{W_b})\exp(D_n|_{W_b})$ and similarly $\exp(E|_{W_b})=\exp(E_s|_{W_b})\exp(E_n|_{W_b})$. Since the Jordan--Chevalley decomposition is unique and functorial, the equality $\exp(E|_{W_b})=\exp(D|_{W_b})$ implies $\exp(E_s|_{W_b})=\exp(D_s|_{W_b})$ and $\exp(E_n|_{W_b})=\exp(D_n|_{W_b})$.
By hypothesis (1), $\operatorname{Spec}(D_s|_{W_b})\subset \Omega$ and $\operatorname{Spec}(E_s|_{W_b})\subset \Omega$. Since $\exp$ is injective on $\Omega$ by hypothesis (2), $\exp(E_s|_{W_b})=\exp(D_s|_{W_b})$ implies $E_s|_{W_b}=D_s|_{W_b}$. Moreover, since the exponential map is injective on nilpotent operators, $\exp(E_n|_{W_b})=\exp(D_n|_{W_b})$ implies $E_n|_{W_b}=D_n|_{W_b}$. Hence $E|_{W_b}=D|_{W_b}$.
In particular, $E(b)=D(b)$ for every $b\in B$, hence $E=D$ and $\varphi\in\Aut(B)_D$.
\end{proof}
 
\begin{remark}[Semisimple obstruction]
Let $D\in\operatorname{LFD}(B)$ and write $D=D_s+D_n$ (Jordan--Chevalley decomposition, with $[D_s,D_n]=0$).
Then $\exp(D)=\exp(D_s)\exp(D_n)$.
The possible failure of $\Aut(B)_D=\Aut(B)_{\exp(D)}$ comes from the kernel of $\exp$ on the semisimple part: if $D_s$ has eigenvalues in $2\pi i\mathbb{Z}$ on all weight spaces, then $\exp(D_s)=\mathrm{id}$ although $D_s\neq0$, so different $D$'s may share the same $\exp(D)$. In contrast, on the nilpotent part the exponential is injective, as shown by Proposition~\ref{prop:LND-equality}.
\end{remark}

\begin{corollary}\label{cor:ker-subgroup}
For an affine domain $B$, let $D\in \operatorname{LND}(B)$ and $a\in \ker(D)$. Then $[aD,D]=0$, 
hence $\{\exp(aD)\mid a\in \ker(D)\}$ is a subgroup of $\operatorname{Aut}_D(B)$. 
\end{corollary}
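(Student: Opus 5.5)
The plan has three parts: check that $aD$ is locally nilpotent and commutes with $D$, apply Lemma~\ref{lem:comm-exp} to get $\exp(aD)\in\operatorname{Aut}_D(B)$, and then prove closure under products and inverses.

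\textbf{Local nilpotency.} Fix $a\in\ker(D)$. Since $D(a)=0$, the Leibniz rule gives, by induction on $n$,
\[
(aD)^n=a^nD^n \quad\text{for all } n\ge 0.
\]
For any $b\in B$ with $D^N(b)=0$, this yields $(aD)^N(b)=a^ND^N(b)=0$. Hence $aD\in\LND(B)\subset\operatorname{LFD}(B)$, so $\exp(aD)$ is a well-defined automorphism of $B$.

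\textbf{Commutation and membership.} For any $b\in B$,
\[
[aD,D](b)=aD(D(b))-D(aD(b))=aD^2(b)-D(a)D(b)-aD^2(b)=-D(a)D(b)=0,
\]
so $[aD,D]=0$. The first half of Lemma~\ref{lem:comm-exp}, applied with $D'=aD$, then gives $\exp(aD)\in\operatorname{Aut}_D(B)$ for every $a\in\ker(D)$.

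\textbf{Subgroup property.} The set contains $\exp(0\cdot D)=\mathrm{id}$, since $0\in\ker(D)$. For $a,c\in\ker(D)$ the derivations $aD$ and $cD$ commute: the computation above with $c$ in place of the first factor gives
\[
[aD,cD]=aD(c)D-cD(a)D=0.
\]
For commuting locally nilpotent derivations one has $\exp(aD)\exp(cD)=\exp((a+c)D)$. To verify it, take $b\in B$ and work on a finite-dimensional space stable under both $aD$ and $cD$, for instance the span of the elements $(aD)^i(cD)^j(b)$. On that space both operators are commuting nilpotent endomorphisms, and the binomial expansion of $\exp$ applies. Since $a+c\in\ker(D)$, the set is closed under products. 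Inverses come from $\exp(aD)^{-1}=\exp(-aD)$ with $-a\in\ker(D)$.

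The only step that needs care is justifying $\exp(aD)\exp(cD)=\exp((a+c)D)$, which requires the finite-dimensional common stable subspace; everything else is a direct computation.
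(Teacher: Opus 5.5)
Your proof is correct and follows the route the paper intends: the paper states this corollary without proof, as an immediate consequence of $[aD,D]=-D(a)D=0$ together with Lemma~\ref{lem:comm-exp}. Your extra steps fill in details the paper leaves implicit, namely checking that $aD$ is locally nilpotent and that $a\mapsto\exp(aD)$ is a homomorphism from $(\ker D,+)$ into $\operatorname{Aut}(B)$, the latter via the finite-dimensional common stable subspace.
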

 
\begin{example}
    If $D\in \operatorname{LND}(\mathbb{C}^{[2]})$ is given by $D=f(X)\frac{\partial}{\partial Y}$, then $\exp(D)=(X, Y+f(X))$ and, by Proposition~\ref{prop:LND-equality}, $\operatorname{Aut}_D(\mathbb{C}^{[2]})=\operatorname{Aut}_{\exp(D)}(\mathbb{C}^{[2]})$.
\end{example}

\begin{remark}\label{rem:Jordan}
Let $M \in M_{2}(\mathbb{C})$ be the matrix associated with a linear automorphism of $\mathbb{C}[X,Y]$.  
By the Jordan decomposition theorem, $M$ is similar to a Jordan matrix $J$ via some $P \in \operatorname{GL}_{2}(\mathbb{C})$:
\[
M = PJP^{-1}, \qquad 
J \in \left\{
\begin{bmatrix}\lambda_1 & 0 \\ 0 & \lambda_2 \end{bmatrix},\;
\begin{bmatrix}\lambda & 0 \\ 0 & \lambda \end{bmatrix},\;
\begin{bmatrix}\lambda & 1 \\ 0 & \lambda \end{bmatrix}
\right\}, \quad \lambda_1,\lambda_2,\lambda \in \mathbb{C}^*.
\]
Therefore, every linear automorphism is conjugate to one of the canonical forms:
\[
\psi = (e^{\lambda_1}X,\, e^{\lambda_2}Y), 
\qquad 
\psi = (e^{\lambda}X,\, e^{\lambda}Y),
\qquad 
\psi = (e^{\lambda}(X+ Y),\, e^{\lambda}Y).
\]
Indeed, $\exp\!\begin{pmatrix}\lambda & 1 \\ 0 & \lambda\end{pmatrix} = e^\lambda\begin{pmatrix}1 & 1 \\ 0 & 1\end{pmatrix}$, so the automorphism induced by the Jordan block sends $(X,Y)\mapsto(e^\lambda(X+Y),\, e^\lambda Y)$.
Consequently, $\operatorname{Aut}_M(\mathbb{C}[X,Y]) = P^{-1}\,\operatorname{Aut}_J(\mathbb{C}[X,Y])\,P$.
\end{remark}

\begin{theorem}
For each $\psi \in \operatorname{LFA}(\mathbb{C}[X,Y])$, there exists an automorphism 
$\varphi \in \operatorname{Aut}(\mathbb{C}[X,Y])$ such that $\varphi\psi\varphi^{-1}$ is one of the following forms:
\begin{enumerate}
    \item $\psi=(X,\,Y+f(X))$, with $f(X)\in \mathbb{C}[X]$.
    \item $\psi=(X+1,\,e^bY)$, with $b\in \mathbb{C}$.
    \item $\psi=(e^aX,\, e^{am}Y+e^{am}X^m)$, with $a\in \mathbb{C}$ and $m\in \mathbb{Z}_{\geq 1}$.
    \item \textbf{(Linear case)} $\psi=(aX+bY,\,cX+dY)$, with $a,b,c,d\in \mathbb{C}$ and $ad-bc\neq 0$.  
    By Remark~\ref{rem:Jordan}, every such automorphism is conjugate to one of:
    \begin{enumerate}
        \item $\psi=(e^{\lambda_1}X,\, e^{\lambda_2}Y)$, with $\lambda_1,\lambda_2 \in \mathbb{C}$ (diagonalizable case).  
        \item $\psi=(e^{\lambda}X,\, e^{\lambda}Y)$, with $\lambda\in \mathbb{C}$ (scalar case).  
        \item $\psi=(e^{\lambda}(X+Y),\, e^{\lambda}Y)$, with $\lambda\in \mathbb{C}$ (non-diagonal Jordan block).
    \end{enumerate}
\end{enumerate}
\end{theorem}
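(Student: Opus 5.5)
The plan is to reduce the statement to Lemma~\ref{lfdclas} through the exponential map. I first show that every $\psi\in\operatorname{LFA}(\mathbb{C}[X,Y])$ can be written as $\psi=\exp(D)$ for some $D\in\operatorname{LFD}(\mathbb{C}[X,Y])$. Then I pick $\varphi$ with $\varphi D\varphi^{-1}$ in one of the four normal forms of Lemma~\ref{lfdclas}. By \eqref{eq:conj-exp}, $\varphi\psi\varphi^{-1}=\exp(\varphi D\varphi^{-1})$, so it only remains to compute the exponentials of the normal forms.

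These exponentials are short computations once each normal form is split into its Jordan parts $D_s+D_n$ with $[D_s,D_n]=0$.
\begin{enumerate}
\item For $f(X)\frac{\partial}{\partial Y}$ we get $\exp=(X,Y+f(X))$. The case $D=0$, i.e.\ $\psi=\mathrm{id}$, is included by allowing $f=0$.
\item For $\frac{\partial}{\partial X}+bY\frac{\partial}{\partial Y}$, the two summands commute, so $\exp=(X+1,e^bY)$.
\item For $aX\frac{\partial}{\partial X}+(amY+X^m)\frac{\partial}{\partial Y}$, take $D_s=aX\frac{\partial}{\partial X}+amY\frac{\partial}{\partial Y}$ and $D_n=X^m\frac{\partial}{\partial Y}$. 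These commute because $D_s(X^m)=amX^m$. Hence $\exp(D)=\exp(D_s)\exp(D_n)=(e^aX,\,e^{am}(Y+X^m))$.
\item In the linear case, $\exp$ of the matrix gives an invertible linear map. Remark~\ref{rem:Jordan} then yields the three subcases, using $\exp\begin{pmatrix}\lambda&1\\0&\lambda\end{pmatrix}=e^\lambda\begin{pmatrix}1&1\\0&1\end{pmatrix}$.
\end{enumerate}

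The main work is the existence of a logarithm. I would write $\psi=\psi_s\psi_u$, its Jordan decomposition, with $\psi_s$ and $\psi_u$ commuting. Set $N:=\log\psi_u=\sum_{k\ge1}(-1)^{k+1}(\psi_u-\mathrm{id})^k/k$. This is locally finite, and it is a locally nilpotent derivation by the standard argument in \cite{Maubach03}. By Furter--Maubach \cite{FurMa2010}, after conjugating we may assume $\psi_s=(aX,bY)$. I then look for $D_s=\alpha X\frac{\partial}{\partial X}+\beta Y\frac{\partial}{\partial Y}$ with $e^\alpha=a$ and $e^\beta=b$ and with $[D_s,N]=0$. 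Since $N$ commutes with $\psi_s$, it preserves the $\psi_s$-eigenspaces $\bigoplus_{a^ib^j=\mu}\mathbb{C}X^iY^j$. So it suffices to choose the logarithms so that $D_s$ has the same eigenspaces. This means the relation lattice $L=\{(i,j): a^ib^j=1\}$ should equal $\{(i,j): i\alpha+j\beta=0\}$.
\begin{enumerate}
\item If $\operatorname{rank}L=0$, any choice of logarithms works.
\item If $L=\mathbb{Z}(p,q)$ with $\gcd(p,q)=1$, I shift $\alpha$ by $2\pi ik$ and $\beta$ by $2\pi il$. This changes $p\alpha+q\beta$ by $2\pi i(pk+ql)$, which can be any element of $2\pi i\mathbb{Z}$, so I can make $p\alpha+q\beta=0$.
\end{enumerate}
Once $[D_s,N]=0$, we get $\exp(D_s+N)=\psi_s\psi_u=\psi$, and $D_s+N$ is locally finite.

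The hard step is $\operatorname{rank}L=2$, i.e.\ $\psi_s$ of finite order. Here the eigenspace matching is impossible unless $\psi_s=\mathrm{id}$. My first attempt would be to normalize $N$ and $\psi_s$ simultaneously. By Rentschler's theorem, conjugate so that $N=f(X)\frac{\partial}{\partial Y}$ with $\ker N=\mathbb{C}[X]$. The finite cyclic group generated by $\psi_s$ commutes with $N$, so it preserves $\ker N$. Using Jung--van der Kulk and linearization of finite groups acting on $\mathbb{C}^2$, I would try to arrange $\psi_s=(\zeta X+c,\,\eta Y+g(X))$. Then I would try to remove $c$ and $g$ by a further conjugation inside the isotropy group of $N$ described in Theorem~\ref{th3.2}. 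After that, $\psi=(\zeta X,\,\eta Y+\eta f(X))$ with $f(\zeta X)=\eta^{-1}\cdot\eta f(X)$ compatibility, and I would match it directly to form (1), (3) or (4).

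Should this simultaneous normalization become unwieldy, the alternative is to bypass logarithms entirely. Instead I would reprove the classification for automorphisms by mirroring the proof of \cite[Corollary 4.7]{V92}, applied to $\psi_s$ and $\psi_u$ in place of $D_s$ and $D_n$.
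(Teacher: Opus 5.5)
Your reduction works when $\psi_s=\mathrm{id}$, when $\psi_u=\mathrm{id}$, and when $\psi_s$ has infinite order. There is one oversight there: $L$ need not be saturated. For instance $\psi_s=(-X,\,tY)$ with $t$ not a root of unity has $L=\mathbb{Z}(2,0)$, and then no choice of logarithms matches the eigenspaces; one can check that $N=0$ is forced in that situation, so it is harmless. The step that genuinely fails is the remaining case, $\psi_s\neq\mathrm{id}$ of finite order with $\psi_u\neq\mathrm{id}$. No argument can close it, because the statement is false there.

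Take $\psi=(-X,\,Y+1+X^2)$.
\begin{enumerate}
\item $\psi$ preserves the weighted degree with $\deg X=1$, $\deg Y=2$, so it is locally finite. Its Jordan parts are $\psi_s=(-X,\,Y)$ and $\psi_u=\exp(N)$ with $N=(1+X^2)\partial_Y$.
\item Suppose $\psi=\exp(D)$ with $D=D_s+D_n$ locally finite. Uniqueness of the Jordan decomposition gives $\exp(D_s)=\psi_s$ and $\exp(D_n)=\psi_u$, hence $D_n=N$ and $[D_s,N]=0$.
\item Then $D_s$ preserves $\ker N=\mathbb{C}[X]$, so $D_s(X)=cX+d$. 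The condition $\exp(D_s)(X)=-X$ forces $e^c=-1$ and $d=0$.
\item But $0=[D_s,N](Y)=2cX^2-(1+X^2)\,\partial_Y\bigl(D_s(Y)\bigr)$. Substituting $X=i$ gives $c=0$, a contradiction.
\end{enumerate}
Each form in (1)--(4) is the exponential of a locally finite derivation (by your own computations, plus existence of matrix logarithms). Since $\varphi^{-1}\exp(D')\varphi=\exp(\varphi^{-1}D'\varphi)$, this $\psi$ is conjugate to none of them.

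This $\psi$ is exactly the kind of output your simultaneous normalization produces. It has the form $(\zeta X,\,\eta Y+\eta f(X))$ with $f(\zeta X)=\eta f(X)$, here with $\zeta=-1$, $\eta=1$, $f=1+X^2$. That compatibility only forces $f=X^kh(X^r)$, where $r$ is the order of $\zeta$ and $\eta=\zeta^k$.
\begin{itemize}
\item When $h$ is not a monomial, $f$ has at least two distinct roots.
\item The unipotent parts of (2)--(4) are trivial or have logarithms conjugate to $\partial_Y$ or $X^m\partial_Y$, and (1) has trivial semisimple part.
\item The number of distinct roots of $f$ is a conjugacy invariant of $f(X)\partial_Y$.
\end{itemize}
So no matching with (1)--(4) is possible. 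For the same reason, mirroring \cite[Corollary~4.7]{V92} for $\psi_s,\psi_u$ cannot succeed.

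The paper's own proof never confronts this. It only computes the exponentials of the normal forms of Lemma~\ref{lfdclas}, tacitly assuming that every locally finite automorphism is $\exp(D)$ for some locally finite $D$. Your proposal is therefore more careful than the paper's argument: you correctly isolated the existence of a logarithm as the real issue and settled it when $\psi_s$ has infinite order. But neither argument can prove the statement as written. A correct list must also include triangular forms such as $(\zeta X,\,\zeta^kY+X^kh(X^r))$ with $\zeta$ a primitive $r$-th root of unity.
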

 
\begin{proof}
Cases $(1)$--$(3)$ correspond to triangular and exponential automorphisms, which arise as exponentials of locally finite derivations of types $f(X)\tfrac{\partial}{\partial Y}$, $\tfrac{\partial}{\partial X}+bY\tfrac{\partial}{\partial Y}$, and $aX\tfrac{\partial}{\partial X}+(amY+X^m)\tfrac{\partial}{\partial Y}$, respectively. 
For case $(4)$, the linear automorphism $\psi=(aX+bY,\,cX+dY)$ corresponds to a matrix $M\in \operatorname{GL}_2(\mathbb{C})$.  
By Remark~\ref{rem:Jordan}, $M$ is conjugate to one of the three canonical Jordan forms, yielding subcases $(4a)$, $(4b)$, and $(4c)$.
The exponential description is consistent: diagonalizable matrices correspond to derivations $uX\tfrac{\partial}{\partial X}+vY\tfrac{\partial}{\partial Y}$, scalar matrices to $u(X\tfrac{\partial}{\partial X}+Y\tfrac{\partial}{\partial Y})$, and Jordan blocks to $u(X\tfrac{\partial}{\partial X}+Y\tfrac{\partial}{\partial Y})+Y\tfrac{\partial}{\partial X}$. Indeed, $\exp\!\big(u(X\partial_X+Y\partial_Y)+Y\partial_X\big)=\exp(uX\partial_X+uY\partial_Y)\circ\exp(Y\partial_X)=(e^u X + e^u Y,\, e^u Y)=(e^u(X+Y),\,e^uY)$, matching the canonical form.
Thus every linear case is the exponential of a locally finite derivation, completing the classification.
\end{proof}
 
\begin{remark}\label{rem:BassMeisters}
The classification of locally finite automorphisms above is closely related to the classification of polynomial flows in the plane given by Bass and Meisters~\cite{BassMei85}. Their main theorem classifies polynomial flows $\varphi_t\in GA_2(K)$ up to conjugation in $GA_2(K)$, for $K=\mathbb{R}$ or $\mathbb{C}$. Our result differs in two respects: we work algebraically over any algebraically closed field $\mathbb{K}$ of characteristic zero (not requiring an analytic or real structure), and we classify locally finite automorphisms as exponentials of locally finite derivations rather than as time-$t$ maps of polynomial vector fields. The normal forms, however, are in direct correspondence: cases~(1), (2), (3) above correspond to cases~(1), (2), (5) of \cite[Theorem~(4.3)]{BassMei85} evaluated at $t=1$, and the linear case~(4) covers the remaining cases therein.
\end{remark}
 
\begin{remark}\label{rem:linear-resonance}
In the linear case, the centralizer in $\Aut(\mathbb{C}[X,Y])$ depends on whether resonances occur among the eigenvalues. We say that the linear automorphism $\psi$ is \emph{non-resonant} if its eigenvalues $\mu_1,\mu_2\in\mathbb{C}^*$ satisfy: $\mu_1^k\mu_2^l=\mu_i$ implies $(k,l)=e_i$ (the standard basis vector) for $i=1,2$. In the non-resonant case the isotropy group is as small as possible, as described in Theorem~\ref{th:IsotropyLFA}(4) below. In resonant situations the isotropy group can be strictly larger: for example, if $\psi=(\lambda X,\lambda^k Y)$ with $k\ge 2$, then $(X,\,Y+X^k)$ commutes with $\psi$. Likewise, if $\psi=\lambda\,\mathrm{Id}$ with $\lambda$ a root of unity, there exist non-linear automorphisms commuting with $\psi$.
\end{remark}
 
\begin{theorem}\label{th:IsotropyLFA}
Let $\psi \in \operatorname{LFA}(\mathbb{C}[X,Y])$. Then there exists 
$\varphi \in \operatorname{Aut}(\mathbb{C}[X,Y])$ such that 
$\varphi \psi \varphi^{-1}$ belongs to one of the following four families, 
and in each case the isotropy group is as follows:
\begin{enumerate}
    \item 
    $\psi = (X,\, Y+f(X))$ with $f(X)\in \mathbb{C}[X]$. Then
    \[
    \Aut_{\psi}(\mathbb{C}[X,Y])
    = \{(\alpha X + \beta ,\;  \gamma Y + p(X)) \mid \alpha,\beta,\gamma \in \mathbb{C},\ \alpha\gamma \neq 0,\ p(X)\in \mathbb{C}[X],\ f(\alpha X+\beta)=\gamma f(X)\}.
    \]
 
    \item
    $\psi=(X+1,\,e^bY)$ with $b\in \mathbb{C}$. Then
    \[
    \Aut_\psi(\mathbb{C}[X,Y]) 
    =
    \begin{cases}
    \{ (X+\alpha,\, \gamma Y) \mid \alpha\in \mathbb{C},\ \gamma\in \mathbb{C}^* \}, & e^b\neq 1,\\[2mm]
    \{ (X+\alpha,\, \gamma Y+\delta) \mid \alpha,\delta\in \mathbb{C},\ \gamma\in \mathbb{C}^* \}, & e^b=1.
    \end{cases}
    \]
 
    \item 
    $\psi=(e^aX,\, e^{am}Y+e^{am}X^m)$ with $a\in \mathbb{C}^*$ and $m\geq 2$. Then
    \[
    \Aut_{\psi}(\mathbb{C}[X,Y]) 
    = \{ (c X,\, c^{m} Y+\beta X^{m}) \mid \; c\in \mathbb{C}^*,\ \beta\in \mathbb{C} \}.
    \]
 
    \item 
    \textbf{(Linear case)} $\psi=(aX+bY,\,cX+dY)$ with $ad-bc\neq 0$.  
    By Jordan decomposition (Remark~\ref{rem:Jordan}), every such automorphism is conjugate to one of:
    \begin{enumerate}
        \item $\psi=(e^{\lambda_1}X,\, e^{\lambda_2}Y)$ with $\lambda_1\neq \lambda_2$. 
        In the non-resonant case (see Remark~\ref{rem:linear-resonance}),
        \[
        \Aut_{\psi}(\mathbb{C}[X,Y])=\{(\alpha X,\, \beta Y) \mid \alpha,\beta\in \mathbb{C}^*\}.
        \]
 
        \item $\psi=(e^\lambda X,\, e^\lambda Y)$ (scalar case). In the non-resonant case,
        \[
        \Aut_\psi(\mathbb{C}[X,Y])
        = \{ (\alpha X+\beta Y,\, \gamma X+\delta Y) \mid \alpha,\beta,\gamma,\delta\in \mathbb{C},\ \alpha\delta-\beta\gamma\neq 0\}.
        \]
 
        \item $\psi=(e^\lambda(X+Y),\, e^\lambda Y)$ (Jordan block). In the non-resonant case,
        \[
        \Aut_\psi(\mathbb{C}[X,Y]) 
        = \{(\alpha X+\gamma Y,\, \beta Y) \mid \alpha,\beta,\gamma\in \mathbb{C},\ \alpha\beta\neq 0\}.
        \]
    \end{enumerate}
\end{enumerate}
\end{theorem}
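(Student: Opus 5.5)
The plan is to reduce to the normal forms of the preceding classification theorem and then compute each centralizer directly. By that theorem, every $\psi\in\operatorname{LFA}(\mathbb{C}[X,Y])$ is conjugate to one of the forms (1)--(4). Conjugation only replaces $\Aut_\psi$ by a conjugate subgroup, via the identity $\Aut_{\varphi\psi\varphi^{-1}}=\varphi\,\Aut_\psi\,\varphi^{-1}$ and Remark~\ref{rem:Jordan}. So it suffices to verify the stated isotropy group for each normal form. In every case I will first check the inclusion ``$\supseteq$'' by substituting the listed automorphisms into $\rho\psi=\psi\rho$, which is routine. The work is in the inclusion ``$\subseteq$''.

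\emph{Case (1).} Here $\psi=\exp(D)$ with $D=f(X)\partial_Y$ locally nilpotent. By Proposition~\ref{prop:LND-equality}, $\Aut_\psi=\Aut_D$, and Theorem~\ref{th3.2} gives exactly the stated group; the case $f=0$ is excluded implicitly.

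\emph{Cases (2) and (3).} I do not want to rely on Proposition~\ref{prop:criterion-equality}, because the eigenvalues of $D_s$ on $\mathbb{C}[X,Y]$ are all of $b\mathbb{Z}$ (resp.\ $a\mathbb{Z}$), and these may hit $2\pi i\mathbb{Z}$. Instead I will work directly with the functional equations, imitating the proofs of Theorems~\ref{th3.3} and~\ref{th3.4}. Write $\rho(X)=\sum a_i(X)Y^i$ and $\rho(Y)=\sum b_j(X)Y^j$ and impose $\rho(\psi(X))=\psi(\rho(X))$ and $\rho(\psi(Y))=\psi(\rho(Y))$.

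In case (2) this gives two families of equations:
\begin{itemize}
\item $a_0(X+1)=a_0(X)+1$ and $e^{ib}a_i(X+1)=a_i(X)$ for $i\ge1$;
\item $e^{jb}b_j(X+1)=e^b b_j(X)$.
\end{itemize}
Comparing leading coefficients shows that a nonzero $a_i$ or $b_j$ forces $e^{ib}=1$ or $e^{(j-1)b}=1$, and comparing the subleading terms then forces the polynomial to be constant. This yields $a_0=X+\alpha$, $b_1=\gamma$, and $b_0=\delta$ exactly when $e^b=1$. The remaining constant coefficients are then killed using the Jacobian condition.

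Case (3) is treated the same way. Using weights $\deg X=1$, $\deg Y=m$, the equation $\rho(e^aX)=e^a\rho(X)$ together with the Jordan block in $Y$ should force $\rho(X)=cX$ and $\rho(Y)=c^mY+\beta X^m$.

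\emph{Case (4).} For $\rho$ commuting with a linear $\psi$, I decompose $\rho$ into homogeneous components $\rho=\rho_0+\rho_1+\cdots$. Commutation with a linear map preserves degree, so each component commutes with $\psi$ separately. The component $\rho_1$ then lies in the centralizer of the matrix in $\operatorname{GL}_2$, which is diagonal, full $\operatorname{GL}_2$, or upper-triangular Toeplitz in cases (a), (b), (c) respectively. For $k\neq1$, a nonzero $\rho_k$ would give a monomial $X^pY^q$ of degree $k$ with $\mu_1^p\mu_2^q=\mu_i$; this is excluded by the non-resonance hypothesis of Remark~\ref{rem:linear-resonance}. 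The same holds for $k=0$, since then $\mu_i=1$.

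The main obstacle is the resonant behaviour in cases (2) and (3), where roots of unity among $e^{b}$ or $e^{a}$ allow extra polynomial solutions of the difference equations. In case (2), for instance, when $e^{ib}=1$ for some $i\ge1$, the equation for $a_i$ admits nonzero constant solutions, so $\rho(X)=X+\alpha+\sum c_iY^i$ with the sum over such $i$. The stated description is only correct if these terms are ruled out, so this is the step I will check first. I will try to exclude them using the Jacobian together with the constraint on $\rho(Y)$. If this fails, I expect the statement to need a non-resonance hypothesis like the one in case (4).
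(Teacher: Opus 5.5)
Your plan cannot be completed, because the statement is false in exactly the resonant situations you are worried about.

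\textbf{Cases (2) and (3).} The step that breaks is ``the remaining constant coefficients are then killed using the Jacobian condition.'' After your difference equations, $\partial_X\rho(X)=1$ and all coefficients of $\rho(Y)$ are constants, so $\det J(\rho)=\partial_Y\rho(Y)$. This removes the extra $b_j$ with $j\ge 2$. It says nothing about the constants $c_i$ in $\rho(X)=X+\alpha+\sum_{e^{ib}=1}c_iY^i$, and these survive: $(X+Y^k,\,Y)$ commutes with $(X+1,\,e^bY)$ whenever $e^{kb}=1$. Hence the first line of (2) fails when $e^b$ is a root of unity other than $1$. The second line fails outright: for $e^b=1$ we have $\psi=\exp(\partial/\partial X)$, and Proposition~\ref{prop:LND-equality} together with the paper's example for $\partial/\partial X$ gives $\Aut_\psi=\{(X+p(Y),\,\gamma Y+\delta)\}$. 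So the paper's own argument contradicts that line too. Likewise in (3), if $e^{na}=1$ for some $n\ge 1$, then $(X,\,Y+X^{m+n})$ commutes with $\psi$, so no weight argument can force $\rho(Y)=c^mY+\beta X^m$.

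Your fallback is what actually happens: (2) and (3) need $b,a\notin 2\pi i\mathbb{Q}$. This is exactly the hypothesis the paper's proof imposes, though the statement omits it, when it applies Proposition~\ref{prop:criterion-equality} with $\Omega=\{nb\}$, resp.\ $\{na\}$. Under that hypothesis your direct difference-equation computation is a correct and more elementary substitute for the paper's Jordan--Chevalley argument. It also has the advantage of showing precisely where resonance enters.

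\textbf{Case (4).} In (4c) your own Toeplitz centralizer yields $(\alpha X+\gamma Y,\,\alpha Y)$, in agreement with Theorem~\ref{th3.8}. This contradicts the stated group, where $\beta$ is independent of $\alpha$. Your ``routine'' check of $\supseteq$ fails there: $(2X,\,Y)$ does not commute with $(e^\lambda(X+Y),\,e^\lambda Y)$. Moreover, the literal non-resonance condition of Remark~\ref{rem:linear-resonance} is never satisfied when $\mu_1=\mu_2$ (take $i=2$, $(k,l)=(1,0)$). So in (4b) and (4c) you are tacitly replacing it by ``$e^\lambda$ is not a root of unity''.

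\textbf{The reduction step.} The classification you borrow from the preceding theorem, as the paper also does, is not valid on all of $\operatorname{LFA}(\mathbb{C}[X,Y])$. Take $\psi=(-X,\,Y+X^2+X^4)$. It is locally finite, with Jordan decomposition $(-X,\,Y)\circ(X,\,Y+X^2+X^4)$.
\begin{itemize}
\item It is not unipotent, so it is not conjugate to a form of type (1).
\item Its unipotent part fixes three disjoint lines.
\item The unipotent part of a form of type (2), (3), (4) is $(X+1,Y)$, $(X,Y+X^m)$, or linear. Its fixed locus is, respectively, empty, one line, or a line or the whole plane.
\end{itemize}
Since conjugation respects Jordan decompositions, $\psi$ is conjugate to none of the listed forms. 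The reduction is justified only for $\psi=\exp(D)$ with $D\in\operatorname{LFD}(\mathbb{C}[X,Y])$.
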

 
\begin{proof}
Let $\psi\in\operatorname{LFA}(\mathbb{C}[X,Y])$. In each case of the classification of Lemma~\ref{lfdclas}, the corresponding locally finite automorphism is explicitly of the form $\exp(D)$ for some $D\in\operatorname{LFD}(\mathbb{C}[X,Y])$, as verified case by case below. By Lemma~\ref{lfdclas} and after conjugation in $\Aut(\mathbb{C}[X,Y])$, the derivation $D$ can be chosen in one of the normal forms of Theorems~\ref{th3.2}--\ref{th3.4} and, in the linear case, Theorems~\ref{th3.6}--\ref{th3.8}.
 
\textbf{Case (1).} The derivation $D=f(X)\partial/\partial Y$ is locally nilpotent, so Proposition~\ref{prop:LND-equality} gives $\Aut_{\psi}(\mathbb{C}[X,Y])=\Aut_D(\mathbb{C}[X,Y])$, which is the group of Theorem~\ref{th3.2}.
 
\textbf{Case (2).} The automorphism $\psi=(X+1,e^bY)$ is the exponential of $D=\tfrac{\partial}{\partial X}+bY\tfrac{\partial}{\partial Y}$. When $b=0$, $D=\partial/\partial X$ is locally nilpotent and Proposition~\ref{prop:LND-equality} applies directly. When $b\neq 0$, the Jordan decomposition of $D$ is $D_s=bY\partial/\partial Y$ and $D_n=\partial/\partial X$, so $D$ is not LND. The semisimple part $D_s$ acts on the monomial $X^iY^j$ by $D_s(X^iY^j)=jb\cdot X^iY^j$, so the eigenvalues of $D_s$ all belong to $\Omega=\{nb\mid n\in\mathbb{Z}_{\ge0}\}$. We apply Proposition~\ref{prop:criterion-equality} with this $\Omega$: the map $\exp$ is injective on $\Omega$ if and only if $\exp(n_1 b)\neq\exp(n_2 b)$ for all $n_1\neq n_2$ in $\mathbb{Z}_{\ge0}$, which holds whenever $b\notin 2\pi i\,\mathbb{Q}$. Under this non-resonance hypothesis $\Aut_{\exp(D)}=\Aut_D$, which is the group of Theorem~\ref{th3.3}.
 
\textbf{Case (3).} The automorphism $\psi=(e^aX, e^{am}Y+e^{am}X^m)$ is the exponential of $D=aX\partial/\partial X+(amY+X^m)\partial/\partial Y$. Since $D(X)=aX\neq 0$, this derivation is \emph{not} locally nilpotent. Its Jordan decomposition is $D_s(X)=aX$, $D_s(Y)=amY$ and $D_n(X)=0$, $D_n(Y)=X^m$. The semisimple part $D_s$ acts on $X^iY^j$ by $D_s(X^iY^j)=(i+mj)a\cdot X^iY^j$, so the eigenvalues of $D_s$ all belong to $\Omega=\{na\mid n\in\mathbb{Z}_{\ge0}\}$. We apply Proposition~\ref{prop:criterion-equality} with this $\Omega$: the map $\exp$ is injective on $\Omega$ if and only if $\exp(n_1 a)\neq\exp(n_2 a)$ for all $n_1\neq n_2$ in $\mathbb{Z}_{\ge0}$, which holds whenever $a\notin 2\pi i\,\mathbb{Q}$. Under this non-resonance hypothesis $\Aut_{\exp(D)}=\Aut_D$, which is the group of Theorem~\ref{th3.4}.
 
\textbf{Case (4) (linear).} $D$ may be chosen linear (Remark~\ref{obslinear}), and is semisimple or has one Jordan block. Under the non-resonance hypothesis of Remark~\ref{rem:linear-resonance}, Proposition~\ref{prop:criterion-equality} gives $\Aut_{\exp(D)}(\mathbb{C}[X,Y])=\Aut_D(\mathbb{C}[X,Y])$, computed in Theorems~\ref{th3.6}, \ref{th3.7}, and~\ref{th3.8}.
 
Finally, since isotropy is conjugation-invariant, $\operatorname{Aut}_{\psi}(\mathbb{C}[X,Y])=\varphi^{-1}\operatorname{Aut}_{\varphi\psi\varphi^{-1}}(\mathbb{C}[X,Y])\varphi$.
\end{proof}

\bibliographystyle{alpha} \bibliography{biblio}
 
\end{document}